\definecolor{darkblue}{rgb}{0, 0, .4}
\definecolor{grey}{rgb}{.7, .7, .7}
\newtheorem{theorem}{Theorem}[section]
\newtheorem{lemma}[theorem]{Lemma}
\theoremstyle{definition}
\newtheorem{definition}[theorem]{Definition}
\newtheorem{example}[theorem]{Example}
\theoremstyle{remark}
\newtheorem{remark}[theorem]{Remark}
\numberwithin{equation}{section}
\theoremstyle{theorem}
\newtheorem{corollary}[theorem]{Corollary}
\newtheorem{algorithm}[theorem]{Algorithm}
\newtheorem{proposition}[theorem]{Proposition}
\newcommand{\N}[0]{\mathbb{N}}
\newcommand{\Z}[0]{\mathbb{Z}}
\newcommand{\s}[0]{\sigma}
\newcommand{\f}[0]{\varphi}
\newcommand{\ul}[1]{\underline{#1}}
\newcommand{\union}{\cup}
\newcommand{\hs}{}  
\newcommand{\hd}{\diamond}  
\newcommand{\hz}{\circ}  
\newcommand{\hf}{\bullet}  
\newcommand{\hh}{\star}  
\newcommand{\heap}{ \xymatrix @=-9pt @! } 
\def\SDSize{6}  
\def\SDSizeTANRIGHT{4}  
\def\SDSizeTANLEFT{2}  
\def\SDMidpt{3}  
\def\SDColor{blue}
\def\SDEColor{black}
\newcommand{\StringRXD}[1]{{\color{\SDColor}\xy (\SDSize, \SDSize)*{}; (0, 0)*{}; **\crv{~**\dir{.}(\SDMidpt,\SDMidpt)};  (\SDMidpt, \SDMidpt)*{{\color{\SDEColor}#1}}; \endxy }}
\newcommand{\StringLXD}[1]{{\color{\SDColor}\xy (0, \SDSize)*{}; (\SDSize, 0)*{}; **\crv{~**\dir{.}(\SDMidpt,\SDMidpt)};  (\SDMidpt, \SDMidpt)*{{\color{\SDEColor}#1}}; \endxy }}
\newcommand{\StringL}[1]{{\color{\SDColor}\xy (\SDSize, \SDSize)*{}; (0, 0)*{}; (0, \SDSize)*{}; **\crv{(\SDSizeTANLEFT,\SDMidpt)};  (\SDMidpt, \SDMidpt)*{{\color{\SDEColor}#1}}; \endxy}}
\newcommand{\StringR}[1]{{\color{\SDColor}\xy (0,0)*{}; (\SDSize, 0)*{}; (\SDSize, \SDSize)*{}; **\crv{(\SDSizeTANRIGHT,\SDMidpt)}; (\SDMidpt, \SDMidpt)*{{\color{\SDEColor}#1}}; \endxy }}
\newcommand{\StringLX}[1]{{\color{\SDColor}\xy (0, \SDSize)*{}; (\SDSize, 0)*{}; **\crv{(\SDMidpt,\SDMidpt)};  (\SDMidpt, \SDMidpt)*{{\color{\SDEColor}#1}}; \endxy }}
\newcommand{\StringRX}[1]{{\color{\SDColor}\xy (\SDSize, \SDSize)*{}; (0, 0)*{}; **\crv{(\SDMidpt,\SDMidpt)};  (\SDMidpt, \SDMidpt)*{{\color{\SDEColor}#1}}; \endxy }}
\newcommand{\StringLR}[1]{{\color{\SDColor}\xy (0, 0)*{}; (0, \SDSize)*{}; **\crv{(\SDSizeTANLEFT,\SDMidpt)};  (\SDSize, 0)*{}; (\SDSize, \SDSize)*{}; **\crv{( \SDSizeTANRIGHT,\SDMidpt)}; (\SDMidpt, \SDMidpt)*{{\color{\SDEColor}#1}}; \endxy }}
\newcommand{\StringLRX}[1]{{\color{\SDColor}\xy (0, \SDSize)*{}; (\SDSize, 0)*{}; **\crv{(\SDMidpt,\SDMidpt)};  (\SDSize, \SDSize)*{}; (0, 0)*{}; **\crv{(\SDMidpt,\SDMidpt)}; (\SDMidpt, \SDMidpt)*{{\color{\SDEColor}#1}}; \endxy }}
\newcommand{\h}{\mathsf{h}}
\newcommand{\w}{\mathsf{w}}
\renewcommand{\u}{\mathsf{u}} 
\renewcommand{\v}{\mathsf{v}} 
\newcommand{\E}{\mathcal{E}}
\newcommand{\block}[1]{\begin{bmatrix} #1 \end{bmatrix}}
\newcommand{\pblock}[1]{\begin{pmatrix} #1 \end{pmatrix}}
\begin{document}

\title{Kazhdan--Lusztig polynomials for maximally-clustered hexagon-avoiding permutations}

\begin{abstract}
We provide a non-recursive description for the bounded admissible sets of masks
used by Deodhar's algorithm \cite{d} to calculate the Kazhdan--Lusztig
polynomials $P_{x,w}(q)$ of type $A$, in the case when $w$ is hexagon avoiding
\cite{b-w} and maximally clustered \cite{losonczy}.  This yields a
combinatorial description of the Kazhdan--Lusztig basis elements of the Hecke
algebra associated to such permutations $w$.  The maximally-clustered
hexagon-avoiding elements are characterized by avoiding the seven classical
permutation patterns $\{3421, 4312, 4321, 46718235, 46781235, 56718234,
56781234\}$.  We also briefly discuss the application of heaps to permutation
pattern characterization.
\end{abstract}

\author{Brant C. Jones}
\address{Department of Mathematics Box 354350, University of Washington, Seattle, WA 98195}
\email{\href{mailto:brant@math.washington.edu}{\texttt{brant@math.washington.edu}}}
\urladdr{\url{http://www.math.washington.edu/\~brant/}}

\thanks{The author received support from NSF grant DMS-9983797.}

\keywords{Kazhdan--Lusztig polynomial, pattern avoidance, 2-sided weak Bruhat
order, 321-hexagon, freely-braided, maximally-clustered.}

\date{\today}

\maketitle




\bigskip
\section{Introduction}\label{s:background}

The Kazhdan--Lusztig polynomials introduced in \cite{k-l} have interpretations
for finite Weyl groups as Poincar\'e polynomials for intersection cohomology of
Schubert varieties \cite{K-L2} and as a $q$-analogue of the multiplicities for
Verma modules \cite{BeilBern,BryKash}.  From these interpretations, it is known
that the Kazhdan--Lusztig polynomials have nonnegative integer coefficients.
However, their combinatorial structure remains obscure and no simple all
positive formula for the coefficients is known in general.  For an introduction
to these polynomials, consult \cite{h,Deodhar94,brenti04,b-b}.

Deodhar \cite{d} has proposed a framework for determining the Kazhdan--Lusztig
polynomials which can be described for an arbitrary Coxeter group.  The
framework gives the Kazhdan--Lusztig polynomials in the form of a combinatorial
generating function, but generally involves summation over a certain recursively
defined set.

In this paper, we show that when $w$ is a permutation that is hexagon avoiding
and maximally clustered, Deodhar's algorithm \cite{d} gives a simple
combinatorial formula for the Kazhdan--Lusztig polynomials associated to such
permutations.  This also yields a combinatorial description of the corresponding
Kazhdan--Lusztig basis elements of the Hecke algebra.  The maximally clustered
permutations introduced in \cite{losonczy} are a generalization of the freely
braided permutations developed in \cite{g-l1} and \cite{g-l2}, and these in turn
include the fully commutative permutations studied in \cite{s1} as a subset.
Section~\ref{s:background} describes Deodhar's algorithm.  In
Section~\ref{s:main} we state our main result which is a formula for the
Kazhdan--Lusztig polynomials associated to maximally-clustered hexagon-avoiding
permutations.  Section~\ref{s:proof} is devoted to the proof of the formula.
Section~\ref{s:patterns} gives a pattern comparison result showing that the
hexagon-avoiding property studied in this paper can be characterized by
avoiding four 1-line patterns.  

\subsection{Background}

We view the symmetric group $S_n$ as a rank $n-1$ Coxeter group of type $A$
with the set of generators $S = \{ s_1, \dots, s_{n-1} \}$ and relations of the
form $(s_i s_{i \pm 1})^3 = 1$ together with $(s_{i}s_{j})^{2} = 1$ for $| i -
j | \geq 2$.  The Coxeter graph for $S_n$ is the graph on the generating set
$S$ with edges connecting $s_{i}$ and $s_{j}$ whenever $s_i$ does not commute
with $s_j$.  We may also refer to elements in the symmetric group by the
\textit{1-line notation} $w=[w_{1}w_{2} \dots w_{n}]$ where $w$ is the
bijection mapping $i$ to $w_{i}$.  Then the generators $s_{i}$ are the adjacent
transpositions interchanging the entries $i$ and $i+1$ in the 1-line notation.
Suppose $w=[w_1 \dots w_n]$ and $p=[p_1 \ldots p_k]$ is another permutation in
$S_{k}$ for $k\leq n$.  We say $w$ \textit{contains the permutation pattern}
$p$ or $w$ \textit{contains} $p$ \textit{as a 1-line pattern} whenever there
exists a subsequence $1\leq i_{1}<i_{2}<\ldots<i_{k}\leq n$ such that
\[ w_{i_{a}} < w_{i_{b}} \text{ if and only if } p_{a} < p_{b} \]
for all $1 \leq a < b \leq k$.  We call $(i_1, i_2, \dots, i_k)$ the \em
pattern instance\em.  For example, $[\ul{5}32\ul{4}\ul{1}]$ contains the
pattern $[321]$ in several ways, including the underlined subsequence.  If $w$
does not contain the pattern $p$, we say that $w$ \em avoids \em $p$.

An \em expression \em is any product of generators from $S$ and the \em length
\em $l(w)$ is the minimum length of any expression for the permutation $w$.
Such a minimum length expression is called \em reduced\em.  Each permutation $w
\in S_n$ can have several different reduced expressions representing it.  For
example, one reduced expression for $[3412]$ is $s_2 s_3 s_1 s_2$.  Given $w
\in S_n$, we represent reduced expressions for $w$ in sans serif font, say
$\w=\w_{1} \w_{2}\cdots \w_{p}$ where each $\w_{i} \in S$.  We call any
expression of the form $s_i s_{i \pm 1} s_i$ a \em short-braid \em after A.
Zelevinski (see \cite{fan1}).  We caution the reader that some authors have
used the term short-braid to refer to a commutation move between two entries
$s_i$ and $s_j$ where $|i - j| \geq 2$.  We say that $x < w$ \em in Bruhat
order \em if a reduced expression for $x$ appears as a subexpression that is
not necessarily consecutive, of a reduced expression for $w$.  If $s_i$ appears
as the last factor in any reduced expression for $w$, then we say that $s_i$ is
a \em descent \em for $w$; otherwise, $s_i$ is an \em ascent \em for $w$.  Let
the \em support \em of a permutation $w$, denoted $supp(w)$, be the set of all
generators appearing in any reduced expression for $w$, which is well-defined
by Tits' theorem \cite{t}.  We say that the element $w$ is \em connected \em if
$supp(w)$ is connected in the Coxeter graph of $W$.

We define an equivalence relation on the set of reduced expressions for a
permutation by saying that two reduced expressions are in the same \em
commutativity class \em if one can be obtained from the other by a sequence of \em
commuting moves \em of the form $s_i s_j \mapsto s_j s_i$ where $|i-j| \geq 2$.
If the reduced expressions for an element $w$ form a single commutativity class,
then we say $w$ is \em fully-commutative\em.

\subsection{Heaps}

If $\w = \w_1 \cdots \w_k$ is a reduced expression, then following \cite{s1} we
define a partial ordering on the indices $\{1, \cdots, k\}$ by the transitive
closure of the relation $i \lessdot j$ if $i < j$ and $\w_i$ does not commute
with $\w_j$.  We label each element $i$ of the poset by the corresponding
generator $\w_i$.  It follows quickly from the definition that if $\w$ and
$\w'$ are two reduced expressions for a permutation $w$ that are in the same
commutativity class then the labeled posets of $\w$ and $\w'$ are isomorphic.
This isomorphism class of labeled posets is called the \em heap \em of $\w$,
where $\w$ is a reduced expression representative for a commutativity class of
$w$.  In particular, if $w$ is fully-commutative then it has a single
commutativity class, and so there is a unique heap of $w$.

As in \cite{b-w}, we will represent a heap as a set of lattice points embedded
in $\N^2$.  To do so, we assign coordinates $(x,y) \in \N^2$ to each entry of
the labeled Hasse diagram for the heap of $\w$ in such a way that:
\begin{enumerate} 
\item[(1)] If an entry represented by $(x,y)$ is labeled $s_i$ in the heap, then $x = i$, and
\item[(2)] If an entry represented by $(x,y)$ is greater than an entry
represented by $(x',y')$ in the heap, then $y > y'$.
\end{enumerate}
Since the Coxeter graph of type $A$ is a path, it follows from the definition
that $(x,y)$ covers $(x',y')$ in the heap if and only if $x = x' \pm 1$, $y >
y'$, and there are no entries $(x'', y'')$ such that $x'' \in \{x, x'\}$ and $y'
< y'' < y$.  Hence, we can completely reconstruct the edges of the Hasse
diagram and the corresponding heap poset from a lattice point representation.
This representation enables us to make arguments ``by picture'' that would
otherwise be difficult to formulate.

Although there are many coordinate assignments for any particular heap, the $x$
coordinates of each entry are fixed for all of them, and the coordinate
assignments of any two entries only differ in the amount of vertical space
between them.  In the case that $w$ is fully-commutative, a canonical choice
can be made by ``coalescing'' the entries as in \cite{b-w}.  We will adhere to
this standard when we illustrate specific heaps, but our arguments should
always be viewed as referring to the underlying heap poset.  In particular,
when we consider the heaps of general permutations we will only allude to the
relative vertical positions of the entries, and never their absolute
coordinates.

\begin{example}\label{ex:heap}
One lattice point representation of the heap of $w = s_2 s_3 s_1 s_2 s_4$ is
shown below, together with the labeled Hasse diagram for the unique heap poset
of $w$.

\smallskip
\begin{center}
\begin{tabular}{ll}
\xymatrix @=-4pt @! {
\hs & \hs & \hs & \hs & \hs \\
& \hf & \hs & \hf & \hs & \hs \\
\hf & \hs & \hf & \hs & \hs & \hs \\
& \hf & \hs & \hs & \hs & \hs \\
s_1 & s_2 & s_3 & s_4 \\
} & 
\xymatrix @=0pt @! {
 & \hf_{s_2} \ar@{-}[dl] \ar@{-}[dr] & & \hf_{s_4} \ar@{-}[dl] \\
 \hf_{s_1} \ar@{-}[dr] & & \hf_{s_3} \ar@{-}[dl] \\
 & \hf_{s_2} \\
} \\
\end{tabular}
\end{center}
\end{example}

Suppose $x$ and $y$ are a pair of entries in the heap of $\w$ that correspond
to the same generator $s_i$, so they lie in the same column $i$ of the heap.
Assume that $x$ and $y$ are a \em minimal pair \em in the sense that there is
no other entry between them in column $i$.  Then, for $\w$ to be reduced, there
must exist at least one non-commuting generator between $x$ and $y$, and if
$\w$ is short-braid avoiding, there must actually be two non-commuting labeled
heap entries that lie strictly between $x$ and $y$ in the heap.  We call these
two non-commuting labeled heap entries a \em resolution \em of the pair $x,y$.
If the generators lie in distinct columns, we call the resolution a \em distinct
resolution\em.  The Lateral Convexity Lemma of \cite{b-w} characterizes
fully-commutative permutations $w$ as those for which every minimal pair in the
heap of $w$ has a distinct resolution.

We now describe a notion of containment for heaps.  Recall from \cite{b-j1} that
an \em orientation preserving Coxeter embedding \em $f: \{s_1, \dots, s_{k-1} \}
\rightarrow \{s_1, \dots, s_{n-1} \}$ is an injective map of Coxeter generators such
that for each $m \in \{2, 3\}$, we have
\[ (s_i s_j)^{m} = 1 \text{ if and only if } (f(s_i) f(s_j))^{m} = 1 \]
and the subscript of $f(s_i)$ is less than the subscript of $f(s_j)$ whenever
$i < j$.  We can view this as a map of permutations which we also denote $f:
S_k \rightarrow S_n$ by extending it to a word homomorphism which is then
applied to any reduced expression in $S_k$.

Recall that a subposet $Q$ of $P$ is called \em convex \em if $y \in Q$
whenever $x < y < z$ in $P$ and $x, z \in Q$.  Suppose that $w$ and $h$ are
permutations.  We say that $w$ \em heap-contains \em $h$ if there exist
commutativity classes represented by $\w$ and $\h$, together with an orientation
preserving Coxeter embedding $f$ such that the heap of $f(\h)$ is contained as
a convex labeled subposet of the heap of $\w$.  If $w$ does not heap-contain
$h$, we say that $w$ \em heap-avoids \em $h$.  To illustrate, $w = s_2 s_3 s_1
s_2 s_4$ from Example~\ref{ex:heap} heap-contains $s_1 s_2 s_3$ under the
Coxeter embedding that sends $s_i \mapsto s_{i+1}$, but $w$ heap-avoids $s_1
s_2 s_1$.

In type $A$, the heap construction can be combined with another
combinatorial model for permutations in which the entries from the
1-line notation are represented by strings.  The points at which two
strings cross can be viewed as adjacent transpositions of the 1-line
notation.  Hence, we may overlay strings on top of a heap diagram to
recover the 1-line notation for the element, by drawing the strings
from bottom to top so that they cross at each entry in the
heap where they meet and bounce at each lattice point not in
the heap.  Conversely, each permutation string diagram corresponds
with a heap by taking all of the points where the strings cross as the
entries of the heap.

For example, we can overlay strings on the two heaps of $[3214]$.
Note that the labels in the picture below refer to the strings, not
the generators.
\begin{center}
\begin{tabular}{ll}
	\heap{
	& 3 \ \ 2 &  & 1 \ \ 4 & \\
	\StringR{\hs} & \hs & \StringLRX{\hf} & \hs & \StringL{\hs} \\
	\hs & \StringLRX{\hf} & \hs & \StringLR{\hs} & \hs \\
	\StringR{\hs} & \hs & \StringLRX{\hf} & \hs & \StringL{\hs} \\
	& 1 \ \ 2 &  & 3 \ \ 4 & \\
	} & 
	\heap{
	& 3 \ \ 2 &  & 1 \ \ 4 &  \\
	\hs & \StringLRX{\hf} & \hs & \StringLR{\hs} \\
	\StringR{\hs} & \hs & \StringLRX{\hf} & \hs & \StringL{\hs} \\
	\hs & \StringLRX{\hf} & \hs & \StringLR{\hs} \\
	& 1 \ \ 2 &  & 3 \ \ 4 &  \\
	} \\
\end{tabular}
\end{center}

For a more leisurely introduction to heaps and string diagrams, as well as
generalizations to Coxeter types $B$ and $D$, see \cite{b-j1}.  Cartier and
Foata \cite{cartier-foata} were among the first to study heaps of dimers, which
were generalized to other settings by Viennot \cite{viennot}.  Stembridge has
studied enumerative aspects of heaps \cite{s1,s2} in the context of fully
commutative elements.  Green has also considered heaps of pieces with
applications to Coxeter groups in \cite{green1,green2,green3}.


\subsection{Deodhar's Theorem}

Given any Coxeter group $W$, we can form the \em Hecke algebra
$\mathcal{H}$ \em over the ring $\Z[q^{1/2}, q^{-1/2}]$ with basis $\{ T_w
: w \in W \}$, and relations:
\begin{align*}
T_s T_w = & T_{s w} \text{ for } l(s w) > l(w)  \\
 (T_s)^2 = & (q - 1) T_{s} + q T_1
\end{align*}
where $T_1$ corresponds to the identity element.
Kazhdan and Lusztig \cite{k-l} described another basis for $\mathcal{H}$ that
is invariant under the involution on the Hecke algebra defined by $\overline{q}
= q^{-1}$, $\overline{T_s} = (T_s)^{-1}$, where we denote the involution with
an overline.  This basis, denoted $\{ C_w' : w \in W \}$, has important
applications in representation theory and algebraic geometry \cite{K-L2}.  The
Kazhdan--Lusztig polynomials $P_{x,w}(q)$ describe how to change between these
bases of $\mathcal{H}$:
\[ C_w' = q^{-\frac{1}{2} l(w)} \sum_{x \leq w} P_{x,w}(q) T_x.  \]
The $C_{w}'$ are defined uniquely to be the Hecke algebra elements
that are invariant under the involution and have expansion
coefficients as above, where $P_{x,w}$ is a polynomial in $q$ with
\[ \text{degree } P_{x,w}(q) \leq \frac{l(w)-l(x)-1}{2} \]
for $x<w$ in Bruhat order and $P_{w,w}(q)=1$.  We use the notation $C_{w}'$ to
be consistent with the literature because there is already a related basis
denoted $C_{w}$.

Fix a reduced expression $\w = \w_{1} \w_{2} \cdots \w_{k}$.  Define a \em
mask \em $\s$ associated to the reduced expression $\w$ to be any
binary vector $(\s_1, \cdots, \s_k)$ of length $k = l(w)$.  Every mask
corresponds with a subexpression of $\w$ defined by $\w^\s =
\w_{1}^{\s_1} \cdots \w_{k}^{\s_k}$ where
\[
\w_{j}^{\s_j}  =
\begin{cases}
\w_{j}  &  \text{ if  }\s_j=1\\
\text{id}  &  \text{ if  }\s_j=0.
\end{cases}
\]
Each $\w^\s$ is a product of generators so it determines an element of $W$.  For
$1\leq j\leq k$, we also consider initial sequences of masks denoted $\s[j] =
(\s_1, \cdots, \s_j)$, and the corresponding initial subexpressions $\w^{\s[j]}
= \w_{1}^{\s_1} \cdots \w_{j}^{\s_j}$.  In particular, we have $\w^{\s[k]} =
\w^\s$.  The mask $\s$ is \em proper \em if it does not consist of all 1
entries, since $\w^{(1, \dots, 1)} = \w$ which is the fixed reduced expression
for $w$.

We say that a position $j$ (for $2 \leq j \leq k$) of the fixed reduced
expression $\w$ is a \em defect \em with respect to the mask $\s$ if
\[ l(\w^{\s[j-1]} \w_{j}) < l(\w^{\s[j-1]}). \]
Note that the defect status of position $j$ does not depend on the value of
$\s_j$.  Let $d_{\w}(\s)$ denote the number of defects of $\w$ for a mask $\s$.
We will use the notation $d(\s) = d_{\w}(\s)$ if the reduced word $\w$ is fixed.  

Deodhar's framework gives a combinatorial interpretation
for the Kazhdan--Lusztig polynomial $P_{x,w}(q)$ as the generating function for
masks $\s$ on a reduced expression $\w$ with respect to the defect statistic
$d(\s)$.  We begin by considering subsets of 
\[ \mathcal{S} = \{ \text{ all possible masks $\s$ on $\w$ } \}. \]
For $\mathcal{E} \subset \mathcal{S}$, we define a prototype for $P_{x,w}(q)$:
\[ P_x(\mathcal{E}) = \sum_{ \substack{ \s \in \mathcal{E} \\ \w^{\s} = x } } q^{d(\s)} \]
and a corresponding prototype for the Kazhdan--Lusztig basis element $C_{w}'$:
\[ h(\mathcal{E}) = q^{-{1 \over 2} l(w)} \sum_{ \s \in \mathcal{E} } q^{d(\s)}
T_{\w^{\s}}. \]

\begin{definition}{\bf \cite{d}}\label{d:admissible}
Fix $\w = \w_1 \w_2 \dots \w_k$.  We say that $\mathcal{E} \subset \mathcal{S}$
is \em admissible on \em $\w$ if:
\begin{enumerate}
\item $\mathcal{E}$ contains $\s = (1, 1, \dots, 1)$.
\item $\mathcal{E} = \tilde{\mathcal{E}}$ where $\tilde{\s} = (\s_1, \s_2, \dots, \s_{k-1}, 1 - \s_k)$.
\item $h(\mathcal{E}) = \overline{h(\mathcal{E})}$ is invariant under the involution on the Hecke algebra.
\end{enumerate}

We say that $\mathcal{E}$ is \em bounded on \em $\w$ if $P_x(\mathcal{E})$ has
degree $\leq {1 \over 2} (l(w) - l(x) - 1)$ for all $x < w$ in Bruhat order.
\end{definition}

\begin{theorem}{\bf \cite{d}}\label{t:deodhar}
Let $x, w$ be elements in any Coxeter group $W$, and fix a reduced expression
$\w$ for $w$.  If $\mathcal{E} \subset \mathcal{S}$ is bounded and admissible
on $\w$, then 
\[ P_{x,w}(q) = P_x(\mathcal{E}) = \sum_{ \substack{ \s \in \mathcal{E} \\ \w^{\s} = x } } q^{d(\s)} \]
and hence
\[ C_{w}' = h(\mathcal{E}) = q^{-{1 \over 2} l(w)} \sum_{ \s \in \mathcal{E} } q^{d(\s)} T_{\w^{\s}}. \]
\end{theorem}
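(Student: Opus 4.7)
The plan is to show that $h(\mathcal{E})$ agrees with the Kazhdan--Lusztig basis element $C_w'$, after which reading off the coefficient of $T_x$ yields the formula $P_{x,w}(q) = P_x(\mathcal{E})$. Recall from the discussion preceding Definition~\ref{d:admissible} that $C_w'$ is the unique element of $\mathcal{H}$ which is (i) invariant under the bar involution, (ii) expressible as $q^{-l(w)/2} \sum_{x \leq w} p_x(q) T_x$ with $p_w(q) = 1$, and (iii) satisfies $\deg p_x(q) \leq (l(w)-l(x)-1)/2$ for each $x < w$. My plan is to match each of these three defining properties against one of the hypotheses on $\mathcal{E}$.

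For property (ii), I would first observe that for every mask $\s$ the element $\w^\s$ is obtained as a subword of the reduced expression $\w$, so $\w^\s \leq w$ by the subword characterization of Bruhat order. Collecting masks by their image then gives
\[ h(\mathcal{E}) = q^{-l(w)/2} \sum_{x \leq w} P_x(\mathcal{E}) T_x, \]
with each $P_x(\mathcal{E})$ a polynomial in $q$ with nonnegative integer coefficients since $d(\s) \in \Z_{\geq 0}$. To pin down $P_w(\mathcal{E}) = 1$, note that the unique mask with $\w^\s = w$ is $\s = (1, \ldots, 1)$: any zero entry would give an expression for $w$ of length strictly less than $l(w) = k$. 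This mask lies in $\mathcal{E}$ by admissibility axiom (1), and it contributes no defects, because each initial product $\w_1 \cdots \w_{j-1}$ is reduced and $\w_1 \cdots \w_j$ is reduced of length one greater; hence the contribution is $q^0 T_w$. Property (i) is exactly admissibility axiom (3), and property (iii) is exactly the boundedness hypothesis. Uniqueness of $C_w'$ then yields $h(\mathcal{E}) = C_w'$.

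Since all three defining properties of $C_w'$ fall directly out of the hypotheses of the theorem, I anticipate no serious obstacle here: the content of Definition~\ref{d:admissible} and of boundedness has been engineered precisely to reproduce the Kazhdan--Lusztig axioms, and the theorem is essentially a repackaging. In particular, admissibility axiom (2) (closure under flipping the last mask coordinate) plays no role in the verification above. Its purpose is instead to support the complementary problem of \emph{constructing} bounded admissible sets $\mathcal{E}$ — for instance via recursive procedures that build masks letter by letter — which is where the genuine combinatorial work lies, and which is the task taken up in the rest of this paper.
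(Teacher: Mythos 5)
Your proposal is correct, but note that the paper offers no proof of this statement to compare against: Theorem~\ref{t:deodhar} is imported verbatim from Deodhar \cite{d}, and the present paper's contribution lies entirely in constructing a set $\mathcal{E}$ that satisfies its hypotheses. That said, your argument is essentially Deodhar's original one, and it is complete: grouping masks by $\w^{\s}$ and using the subword characterization of Bruhat order gives the support condition; the all-ones mask is the unique mask with $\w^{\s}=w$ and carries no defects, giving the normalization $P_w(\mathcal{E})=1$; axiom (3) gives bar-invariance; boundedness gives the degree condition; and the uniqueness characterization of $C_w'$ (stated in the paper just before the definition of masks) finishes the job. The only point worth making explicit, if you wanted a fully self-contained argument, is why those conditions characterize $C_w'$ uniquely: if $h$ and $h'$ both satisfy them, a maximal $x$ in the support of $h-h'$ has a bar-invariant coefficient $q^{-l(w)/2}p_x(q)$ with $p_x\in\Z[q]$ of degree at most $\tfrac{1}{2}(l(w)-l(x)-1)$, and the palindromicity forced by bar-invariance is incompatible with that degree bound unless $p_x=0$; this is standard but is the step actually doing the work. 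Your observation that axiom (2) of Definition~\ref{d:admissible} is not used is also accurate --- it is a closure property needed for Deodhar's recursive construction of admissible sets, not for the identification of $h(\mathcal{E})$ with $C_w'$.
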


Billey and Warrington say that $w \in S_n$ is \em hexagon-avoiding \em if
it heap-avoids
\[ [46718235] = s_5 s_6 s_7 s_3 s_4 s_5 s_6 s_2 s_3 s_4 s_5 s_1 s_2 s_3. \]
When $w$ is fully-commutative, this condition is equivalent to avoiding $[46718235]$,
$[46781235]$, $[56718234]$ and $[56781234]$ as permutation patterns.  We will
show in Section~\ref{s:patterns} that this permutation pattern characterization
remains true in more general settings.

\begin{theorem}{\bf \cite{b-w}}\label{t:b-w}
The set $\mathcal{S}$ is bounded and admissible on a reduced expression $\w$
if and only if the corresponding permutation $w$ is $[321]$-avoiding and
hexagon-avoiding.
\end{theorem}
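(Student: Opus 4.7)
The plan is to separate the ``iff'' into checking admissibility (which turns out to be free) and boundedness (which carries the pattern content). Conditions (1) and (2) of Definition~\ref{d:admissible} hold automatically for the full set $\mathcal{S}$: it contains the all-ones mask, and is closed under flipping the last coordinate. For condition (3), I would verify the identity
\[ h(\mathcal{S}) = C'_{\w_1} C'_{\w_2} \cdots C'_{\w_k} \]
by induction on $k$, using $C'_s = q^{-1/2}(T_s+1)$ and tracking how each application of the length-decreasing Hecke relation $T_s T_w = (q-1)T_w + qT_{sw}$ injects exactly one factor of $q$ per defect position; the combinatorics of this branching matches the defect statistic bookkeeping exactly. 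Since each $C'_{\w_i}$ is bar-invariant and the bar involution is an algebra homomorphism, $h(\mathcal{S})$ is bar-invariant for any reduced expression $\w$. So admissibility is automatic and the theorem reduces to characterizing when $\mathcal{S}$ is bounded.

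For the necessity direction, I would argue the contrapositive. If $w$ contains $[321]$, then some reduced expression for $w$ passes through a short-braid $s_i s_{i \pm 1} s_i$, and the mask $(\ldots,1,0,1,\ldots)$ on this triple creates a defect at the third position. The computation for $w_0=[321]$ with $\w = s_1 s_2 s_1$ already illustrates the mechanism: one obtains $P_{s_1}(\mathcal{S}) = 1+q$ whose degree $1$ violates the bound $(l(w_0)-l(s_1)-1)/2 = 1/2$. If instead $w$ heap-contains the hexagon, an analogous but more delicate mask on the 14-letter hexagon expression $s_5 s_6 s_7 s_3 s_4 s_5 s_6 s_2 s_3 s_4 s_5 s_1 s_2 s_3$ accumulates enough defects to beat the degree bound for a suitable $x<w$. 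Exhibiting this extremal mask, and checking that competing mask contributions do not cancel it, is the technical heart of necessity.

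For sufficiency, suppose $w$ is $[321]$- and hexagon-avoiding, so $w$ is fully commutative with a canonical heap. The goal is $d(\sigma) \leq \lfloor (l(w)-l(\w^\sigma))/2 \rfloor$ for every mask $\sigma$. A defect at position $j$ means $\w_j$ is a right descent of $\w^{\sigma[j-1]}$, which via the Lateral Convexity Lemma of \cite{b-w} forces a short-braid configuration to appear inside the subheap of $\w$ carved out by the positions selected in $\sigma[j-1]$ together with position $j$. The avoidance of $[321]$ forbids such configurations locally, and avoidance of the hexagon forbids the global arrangements that would otherwise allow defects to accumulate too rapidly; together these two restrictions deliver the Deodhar degree bound position by position.

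The main obstacle is the sufficiency direction: necessity only requires producing one offending mask per forbidden pattern, while sufficiency demands a uniform degree bound across all $2^{l(w)}$ masks. Showing that the hexagon together with $[321]$ is precisely the minimal obstruction---neither too restrictive nor too permissive---is the combinatorial crux, and requires a careful case analysis of how short-braid configurations can be arranged within a heap, exploiting heap convexity throughout.
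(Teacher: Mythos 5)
This statement is quoted from Billey and Warrington \cite{b-w}; the paper you are reading does not prove it, so there is no internal proof to compare against. Judged on its own terms, your proposal gets the right reduction --- conditions (1) and (2) of Definition~\ref{d:admissible} are trivial for $\mathcal{S}$, and condition (3) follows from Deodhar's identity $h(\mathcal{S}) = C_{\w_1}'\cdots C_{\w_k}'$, so the theorem really is a statement about boundedness alone --- but both remaining directions are left as announcements of where the work lies rather than arguments, and each hides a genuine gap.

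For necessity, the computation on $\w = s_1s_2s_1$ showing $P_{s_1}(\mathcal{S}) = 1+q$ is correct, but it only disposes of $w = [321]$ itself. To conclude for a general $w$ containing $[321]$ (or heap-containing the hexagon) you must lift a violating mask on the embedded subexpression to a violating mask on all of $\w$, and this is not automatic: the defect status of a position depends on the entire prefix $\w^{\s[j-1]}$, so a defect of the local mask need not remain a defect once the surrounding letters are present, and those surrounding letters contribute additional plain-zeros that could restore the inequality \#zero-defects $<$ \#plain-zeros. Controlling this is precisely the kind of string-tracking argument carried out in Proposition~\ref{p:bounded} of this paper (in the opposite direction), and Billey--Warrington need dedicated lemmas on inheritance of the non-Deodhar property under containment; your proposal does not indicate how to do it. For sufficiency, the assertion that $[321]$-avoidance ``forbids such configurations locally'' while hexagon-avoidance ``forbids the global arrangements that would otherwise allow defects to accumulate'' is a paraphrase of the conclusion, not a mechanism: the Lateral Convexity Lemma characterizes full commutativity of heaps and says nothing by itself about how many defects a mask can carry. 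Two smaller slips: a defect at position $j$ means $\w_j$ is a descent of $\w^{\s[j-1]}$, which is an arbitrary group element, not a subheap of $\w$, so the claimed ``short-braid configuration inside the subheap carved out by $\s[j-1]$'' does not follow; and the target inequality should be $d(\s) \leq \frac{1}{2}\bigl(l(w)-l(\w^{\s})-1\bigr)$, not $\lfloor (l(w)-l(\w^{\s}))/2\rfloor$, which is strictly weaker when $l(w)-l(\w^{\s})$ is even.
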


More generally, let $W$ be any Coxeter group and $\mathcal{E} \subset
\mathcal{S}$ be a set of masks on some reduced expression $\w \in W$.  By Lemma
2 of \cite{b-w}, we have that $\mathcal{E}$ is bounded if and only if for every
proper mask $\s \in \mathcal{E} \setminus \{ (1, 1, \dots, 1) \}$, we have
\begin{equation}\label{e:deodhar.ineq}
\text{\# of zero-defects of } \s < \text{\# of plain-zeros of } \s,
\end{equation}
which we refer to as the \em Deodhar bound\em.  Here, a position in $\w$ is a
\em zero-defect \em if it has mask-value 0 and it is also a defect.  A position
in $\w$ is a \em plain-zero \em if it has mask-value 0 and it is not a defect.
We say that an element represented by $\w$ is \em Deodhar \em if $\mathcal{S}$
is bounded on $\w$.

\subsection{Maximally clustered elements}

In \cite{losonczy}, Losonczy introduced the maximally clustered elements of
simply laced Coxeter groups.  We will define a set of masks for the
maximally-clustered hexagon-avoiding permutations that generalizes
Theorem~\ref{t:b-w}.

\begin{definition}{\bf \cite{losonczy}}\label{d:mc}
A \em braid cluster \em is an expression of the form \[ s_{i_1} s_{i_2} \dots
s_{i_k} s_{i_{k+1}} s_{i_k} \dots s_{i_2} s_{i_1} \] where each $s_{i_p}$ for
$1 \leq p \leq k$ has a unique $s_{i_q}$ with $p < q \leq k+1$ such that
$|i_p - i_q| = 1$.  

Let $w$ be a permutation and let $N(w)$ denote the number of $[321]$ pattern
instances in $w$.  We say $w$ is \em maximally-clustered \em if there is a
reduced expression for $w$ of the form 
\[ a_0 c_1 a_1 c_2 a_2 \dots c_M a_M \]
where each $a_i$ is a reduced expression, each $c_i$ is a braid cluster with
length $2 n_i + 1$ and $N(w) = \sum_{i=1}^{M} n_i$.  Such an expression is
called \em contracted\em.  In particular, $w$ is \em freely-braided \em if there
is a reduced expression for $w$ with $N(w)$ disjoint short-braids.  
\end{definition}
Note that this is not the original definition for the maximally clustered
elements; however it is equivalent.  The remarks in Section 5 of \cite{g-l1}
show that the number of $[321]$ pattern instances in $w$ equals the number of
contractible triples of roots in the inversion set of $w$.  Corollary 4.3.3 (ii)
and Corollary 4.3.5 of \cite{losonczy} prove that $\w$ is a contracted reduced
expression for a maximally-clustered element if and only if it has the form
given in Definition~\ref{d:mc}.  Observe that a maximally-clustered permutation
$w$ is fully-commutative if and only if $N(w) = 0$ by \cite{b-j-s}.

In type $A$, there exists a standard form for the braid clusters.

\begin{lemma}\label{l:braid_cluster}
Suppose $x = s_{i_1} s_{i_2} \dots s_{i_{k}} s_{i_{k+1}} s_{i_{k}} \dots s_{i_2}
s_{i_1}$ is a braid cluster of length $2 k + 1$ in type $A$.  Then, $x = s_{m+1}
s_{m+2} \dots s_{m+{k}} s_{m+k+1} s_{m+{k}} \dots s_{m+2} s_{m+1}$ for some $m$.
\end{lemma}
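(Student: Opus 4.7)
The plan is to induct on $k$, peeling off the outermost generators of the braid cluster using the palindromic form, the ``unique later neighbor'' condition, and reducedness of the expression. The base case $k=0$ is immediate with $m = i_1 - 1$.

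For the inductive step, I would first observe that the inner expression $y = s_{i_2} \cdots s_{i_{k+1}} \cdots s_{i_2}$ is itself a braid cluster of length $2k-1$: the uniqueness condition on positions $2 \leq p \leq k$ of $x$ restricts directly to the corresponding condition for $y$, while $l(x) = 2k+1$ forces $l(y) = 2k-1$, so $y$ is reduced. The induction hypothesis then gives $y = s_{m'+1} s_{m'+2} \cdots s_{m'+k} s_{m'+k-1} \cdots s_{m'+1}$ for some $m'$, and matching supports of the two reduced expressions for $y$ yields $\{i_2, \ldots, i_{k+1}\} = \{m'+1, \ldots, m'+k\}$ as sets.

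Applying the ``unique later neighbor'' condition to $i_1$ in combination with the fact that $\{i_2,\ldots,i_{k+1}\}$ is a consecutive integer interval, an elementary case analysis shows that $i_1 \in \{m', m'+1, m'+k, m'+k+1\}$. I would then eliminate the two middle cases using reducedness. If $i_1 = m'+1$, the outer $s_{m'+1}$'s cancel against the boundary $s_{m'+1}$'s at each end of $y$; if $i_1 = m'+k$, then after commuting $s_{m'+k}$ past the initial block $s_{m'+1} \cdots s_{m'+k-2}$ of $y$, applying the braid relation $s_{m'+k} s_{m'+k-1} s_{m'+k} = s_{m'+k-1} s_{m'+k} s_{m'+k-1}$, and cancelling the adjacent pair $s_{m'+k-1} s_{m'+k-1}$, the word is shortened. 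Both cases contradict $l(x) = 2k+1$.

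In the two surviving cases, $i_1 = m'$ gives $x$ already in the required form with $m = m'-1$, and $i_1 = m'+k+1$ reduces to the required form by commuting each outer $s_{m'+k+1}$ inward past $s_{m'+1} \cdots s_{m'+k-1}$ at the corresponding end of $y$, leaving a central factor $s_{m'+k+1} s_{m'+k} s_{m'+k+1}$ to which a single braid relation applies, yielding $m = m'$. I expect the main technical obstacle to be the bookkeeping in the elimination step for $i_1 = m'+k$, since the reducedness argument requires coordinating commutations across the interior block of $y$ with the palindromic boundary structure; the remaining cases are then essentially free commutation and braid manipulation.
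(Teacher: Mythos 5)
Your proof is correct, but it follows a genuinely different route from the paper's. The paper gives a short, top-down argument: it invokes Lemma 4.1.3 of Losonczy to move the largest generator of $supp(x)$ into the central position of the cluster, observes that $supp(x)$ must form a connected path in the Coxeter graph (else the expression is not reduced), and then reads the descending chain of subscripts directly off the uniqueness clause in the definition of a braid cluster. Your argument is instead a self-contained induction on $k$: you recognize the inner word as a shorter reduced braid cluster, match supports via Tits' theorem to get $\{i_2,\dots,i_{k+1}\}=\{m'+1,\dots,m'+k\}$, and then run an explicit case analysis on $i_1$, killing $i_1=m'+1$ and $i_1=m'+k$ by exhibiting length-reducing rewritings and converting the two surviving cases to the canonical form by commutations and one braid move. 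I checked the rewritings in the elimination and conversion steps and they go through, including the degenerate cases $k=1,2$ where the ``initial block'' is empty or the middle cases collapse into the outer ones. What your approach buys is independence from the external normalization lemma; what it costs is the bookkeeping you anticipated. One thing to make explicit as a standing hypothesis is that the braid cluster is reduced: your appeal to $l(x)=2k+1$ uses it, and it is essential (for instance $s_2s_3s_2s_3s_2$ satisfies the uniqueness condition of Definition~\ref{d:mc} but equals $s_3$); the paper makes the same implicit assumption, since its braid clusters only occur as factors of reduced contracted expressions.
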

\begin{proof}
By Lemma 4.1.3 of \cite{losonczy}, there exists a sequence of moves that result
in a braid cluster for the same element $x$ such that the largest generator
$s_{m+k+1}$ appearing in any reduced expression for $x$ appears in the middle
position.

The set of generators appearing in reduced expressions for $x$ must consist of a
connected path in the Coxeter graph, otherwise the original expression for $x$
is not reduced.  By the uniqueness statement in Definition~\ref{d:mc}, the entry
next to $s_{m+k+1-i}$ must be $s_{m+k-i}$ for each $i = 0, \dots, k-1$.
\end{proof}

For our work, we will implicitly assume that any braid cluster has the
canonical form of Lemma~\ref{l:braid_cluster}.  Also, we refer to $s_{m+k}
s_{m+k+1} s_{m+k}$ as the \em central braid \em of the braid cluster 
\[ s_{m+1} s_{m+2} \dots s_{m+k} s_{m+k+1} s_{m+k} \dots s_{m+2} s_{m+1}. \]

Recall that the maximally-clustered permutations are characterized by
avoiding the permutation patterns 
\begin{align}\label{e:mc.patterns}
	\text{ $[3421]$, $[4312]$, and $[4321]$ }
\end{align}
as a result of Proposition 3.2.1 in \cite{losonczy},
while the freely-braided permutations are characterized by avoiding
\begin{align}\label{e:fb.patterns}
	\text{ $[4231]$, $[3421]$, $[4312]$, and $[4321]$ }
\end{align}
as permutation patterns by Proposition 5.1.1 in \cite{g-l1}.


\bigskip
\section{Main Result}\label{s:main}

Given a contracted expression $\w$ for a maximally-clustered hexagon-avoiding
permutation, our main result is that we can identify a set of masks on $\w$
that turn out to be bounded and admissible.  Moreover, this set has a simple
non-recursive description.

\begin{definition}
Let $\w$ be a contracted expression for a maximally-clustered hexagon-avoiding
permutation, where each braid cluster has the form given in
Lemma~\ref{l:braid_cluster}.  We say that a mask $\s$ on $\w$ has a \em {\tt
10*}-instance \em if it has the values 
\[ \pblock{\dots & s_i & s_{i+1} & s_i & \dots \\ * & 1 & 0 & * & *} \]
on any central braid instance $s_i s_{i+1} s_i$ of any braid cluster in
$\w$, where $*$ denotes an arbitrary mask value.  If $\s$ never has the values 1
and 0 (respectively) on the first two entries in any central braid of
$\w$, then we say that $\s$ is a \em {\tt 10*}-avoiding \em mask for $\w$.
\end{definition}

\begin{theorem}\label{t:mc_main}
Let $\w$ be a contracted expression for a maximally-clustered hexagon-avoiding
permutation in $S_n$, and let $\E_{\w}$ be the set of {\tt 10*}-avoiding masks
on $\w$.  Then for any $x \in S_n$,
\[ P_{x,w}(q) = P_x(\E_{\w}) = \sum_{ \substack{ \s \in \E_{\w} \\ \w^{\s} = x } } q^{d(\s)} \]
and hence
\[ C_{w}' = h(\E_{\w}) = q^{-{1 \over 2} l(w)} \sum_{ \s \in \E_{\w} } q^{d(\s)} T_{\w^{\s}} \]
\end{theorem}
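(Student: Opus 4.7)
My planned proof proceeds via Deodhar's Theorem~\ref{t:deodhar}, so it will suffice to establish that $\E_{\w}$ is bounded and admissible on $\w$ in the sense of Definition~\ref{d:admissible}. Two of the admissibility axioms are immediate: the all-ones mask has no $0$ entries, so it vacuously avoids {\tt 10*}-instances, verifying (1); and because $\w$ is contracted and each cluster is in the canonical form of Lemma~\ref{l:braid_cluster}, the final position $\w_k$ is either outside every central braid or else is the third (unconstrained $*$) entry of the central braid of $c_M$, so flipping $\s_k$ cannot create or destroy a {\tt 10*}-instance, verifying (2).

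The substantive parts are bar-invariance of $h(\E_{\w})$ and the Deodhar bound~\eqref{e:deodhar.ineq}. I plan to induct on the number $M$ of braid clusters in the contracted factorization $\w = a_0 c_1 a_1 \cdots c_M a_M$. When $M = 0$ we have $N(w) = 0$, so $w$ is fully-commutative; combined with the hexagon-avoiding hypothesis, Billey--Warrington's Theorem~\ref{t:b-w} gives that $\mathcal{S}$ is bounded and admissible, and since $\w$ contains no central braid we have $\E_{\w} = \mathcal{S}$. For the inductive step, I would peel off the rightmost cluster $c_M = s_{m+1} \cdots s_{m+k} s_{m+k+1} s_{m+k} \cdots s_{m+1}$. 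Of the eight possible mask triples on the central braid $s_{m+k} s_{m+k+1} s_{m+k}$, the six allowed by {\tt 10*}-avoidance, namely those with $(\s_j, \s_{j+1}) \in \{(0,0), (0,1), (1,1)\}$, should assemble into a Hecke algebra factor that is bar-invariant when multiplied against the inductive expression for $h(\E_{\w'})$ where $\w' = a_0 c_1 \cdots a_{M-1}$. The short-braid relation $T_i T_{i+1} T_i = T_{i+1} T_i T_{i+1}$ together with the two symmetric arms of $c_M$ is what accounts for the excluded {\tt 10*}-patterns.

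For the Deodhar bound, I would check that for every proper $\s \in \E_{\w}$ each zero-defect can be charged to a distinct plain-zero. Billey and Warrington's analysis shows that in a hexagon-avoiding $[321]$-avoiding expression any zero-defect at position $j$ admits a plain-zero in one of the two adjacent columns of the heap of $\w$. When $[321]$ instances do occur they are organized into braid clusters by the maximally-clustered hypothesis and the pattern avoidance~\eqref{e:mc.patterns}. The {\tt 10*}-avoidance condition is precisely what forbids the one new configuration, a defect on the middle $s_{i+1}$ of a central braid whose adjacent $s_i$ carries mask value $1$, that would otherwise produce an uncompensated zero-defect; every other heap configuration is controlled by the hexagon-avoidance of $w$ together with the fully-commutative Deodhar argument applied to each $a_i$.

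The principal obstacle I foresee is the local analysis at each braid cluster: verifying that the six allowed central-braid mask patterns contribute a factor which, when combined with the mask sums over the two arms of the cluster and over the fully-commutative piece $a_M$, produces precisely the Hecke-algebra element needed for bar-invariance. The arms of a cluster introduce minimal pairs of repeated generators whose distinct resolutions must be disentangled using heap-convexity, and the interaction between consecutive clusters $c_{M-1}$ and $c_M$ through an intervening fully-commutative piece $a_{M-1}$ is where the hexagon-avoiding hypothesis and the pattern avoidance~\eqref{e:mc.patterns} must conspire most nontrivially to keep the induction from leaking outside $\E_{\w}$.
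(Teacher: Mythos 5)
Your top-level skeleton is exactly the paper's: verify that $\E_{\w}$ is bounded and admissible and invoke Theorem~\ref{t:deodhar}, with axioms (1) and (2) of Definition~\ref{d:admissible} dispatched just as you do. The gap is in your plan for bar-invariance. You propose to show directly that $h(\E_{\w})$ equals $h(\E_{\w'})$ times a ``bar-invariant Hecke algebra factor'' built from the six allowed central-braid mask triples of $c_M$. But $h$ of a restricted mask set does not factor over a decomposition $\w = \w' c_M a_M$: the identity $h(\mathcal{S}) = C_{\w_1}' \cdots C_{\w_k}'$ depends on summing over \emph{both} mask values at each position (since $T_x(T_s+1)$ equals $q^{d}(T_{xs}+T_x)$ with the correct defect power only when both terms are kept), and the defect statistic at positions in $a_M$ depends on the full prefix through the cluster. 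So there is no well-defined ``cluster factor'' to multiply against the inductive expression, and the local computation you gesture at (that the excluded \texttt{10*} masks on a bare central braid contribute something like $C_{s_i}'$) does not by itself assemble into a factorization of $h(\E_{\w})$. The paper sidesteps this entirely: it proves bar-invariance of the \emph{complement} $\E_{\w}^c = \mathcal{S}\setminus\E_{\w}$ (sufficient since $h(\mathcal{S})$ is bar-invariant), stratifies $\E_{\w}^c$ by the set $B$ of positions of \texttt{10*}-instances, and exhibits an explicit bijection $\f$ that collapses a maximal symmetric segment around the rightmost \texttt{10*}-instance onto a single letter with flipped mask value. Because $\f$ removes exactly $k$ defects and $2k$ letters while preserving $\w^{\s}$, one gets $h(\E_{\w}^{B}) = h(\E_{\f(\w)}^{B\setminus\mathbf{max}(B)})$ on the nose, and induction on the number of short-braids (using Lemmas~\ref{l:unique} and~\ref{l:hex.induction} to keep $\f(\w)$ in the class) finishes the argument. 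Some such reduction to a smaller element is needed; your proposal as written does not supply it.

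Your boundedness sketch is closer to the paper's Proposition~\ref{p:bounded} in spirit, but ``charge each zero-defect to a distinct plain-zero'' elides the cases that carry the actual content. The paper removes the last cluster via an operation $\mathcal{B}$ and uses string diagrams to show that a violating mask would descend to a violating mask on a short-braid-avoiding, hexagon-avoiding element, contradicting Theorem~\ref{t:b-w}. The delicate points are: (i) a zero-defect at the top of an internal column of the cluster forces, via \texttt{10*}-avoidance, \emph{two} plain-zeros in some column of the cluster, so deleting the top layer removes at least as many plain-zeros as zero-defects; and (ii) a zero-defect to the right of the cluster whose string passes through the cluster must have its defect status preserved after $\mathcal{B}$, which in one configuration requires swapping the mask values of two specific entries. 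Your outline does not address how defects outside the cluster survive the reduction, which is where the argument could leak.
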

\begin{proof}
This follows from Theorem~\ref{t:deodhar} since $\E_{\w}$ is bounded and
admissible by Propositions~\ref{p:bounded} and \ref{p:admissible} below.
\end{proof}

We begin by showing that the contracted expressions for maximally-clustered
permutations have an especially nice form.

\begin{lemma}\label{l:unique}
Let $\w$ be a contracted reduced expression for a maximally clustered
permutation, so $\w$ has the form
\[ a_0 c_1 a_1 \dots c_{M} a_{M} \]
where each $c_j$ is a braid cluster, and the $a_j$ are short-braid
avoiding.  Then, any generator $s_i$ that appears in any of the braid clusters
$c_j$ does not appear anywhere else in $\w$.
\end{lemma}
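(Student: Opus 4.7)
The plan is to argue by contradiction. Suppose some generator $s_p$ that occurs in the braid cluster $c_j$ also occurs elsewhere in $\w$. By Lemma~\ref{l:braid_cluster}, write $c_j$ in its canonical form $s_{m+1} s_{m+2} \cdots s_{m+k+1} \cdots s_{m+2} s_{m+1}$, so that the generators used by $c_j$ are exactly $\{s_{m+1}, \ldots, s_{m+k+1}\}$ and $p \in \{m+1, \ldots, m+k+1\}$. Without loss of generality, choose the extra occurrence of $s_p$ to be the one closest to $c_j$, and assume it lies to the right.

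The key step is to slide this extra $s_p$ leftward by commutation moves until it sits immediately to the right of $c_j$. This uses two facts: each intervening segment $a_l$ is short-braid avoiding and hence fully commutative in type $A$, and by the minimality of our chosen extra occurrence, no intervening cluster $c_l$ can itself contain $s_p$. The delicate point is that an intervening $c_l$ might still contain $s_{p-1}$ or $s_{p+1}$, so commuting past it requires care; I would handle this either by a structural induction on the number of clusters or by the cleaner alternative described below.

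Once $s_p$ is positioned adjacent to $c_j$, the tail of $\w$ reads $\cdots s_{p+1} s_p s_{p-1} \cdots s_{m+2} s_{m+1} s_p$. If $p = m+1$ this ends in $s_{m+1} s_{m+1}$, immediately contradicting that $\w$ is reduced. Otherwise, since $s_p$ commutes with $s_{m+1}, s_{m+2}, \ldots, s_{p-2}$, slide it leftward through this commuting block to expose the short-braid substring $s_p s_{p-1} s_p$. Apply the braid relation to rewrite this as $s_{p-1} s_p s_{p-1}$, and then continue sliding the newly introduced $s_{p-1}$ leftward: it commutes with every $s_q$ with $q \geq p+1$, so it passes freely through the remainder of the descending half of $c_j$ and the peak. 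Iterating this propagation eventually forces the configuration $s_{p-1} s_p s_{p-1} s_p$ to appear consecutively in a reduced expression for $w$, which is impossible since $s_{p-1} s_p s_{p-1} s_p$ contracts to $s_p s_{p-1}$.

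The main technical obstacle is justifying the sliding across intervening clusters $c_l$ in the first step. A more robust alternative that I would try first is to invoke the pattern-avoidance characterization~(\ref{e:mc.patterns}) of maximally-clustered elements directly. In the string diagram for $c_j$, the strings starting at positions $m+1$ and $m+k+2$ cross each other inside $c_j$, while each of the strings $m+2, \ldots, m+k+1$ crosses both of them. If the extra $s_p$ produces a further crossing involving one or two of these strings (which it must, given the canonical column structure of the heap), then together with the boundary strings $m+1$ and $m+k+2$ one obtains a strictly decreasing length-four subsequence in the $1$-line notation of $w$, exhibiting $[4321]$, $[3421]$, or $[4312]$ as a pattern in $w$ and contradicting that $w$ is maximally clustered.
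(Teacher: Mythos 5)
There is a genuine gap, and it sits in the first (main) line of your argument. The step that commutes the extra occurrence of $s_p$ leftward until it abuts $c_j$ is not justified by the two facts you cite: full commutativity of a segment $a_l$ does not mean its letters commute with $s_p$, and an $a_l$ (not only an intervening cluster) may contain $s_{p-1}$ or $s_{p+1}$ and block the slide entirely. More seriously, the hypotheses you actually use --- that $\w$ is reduced, that the clusters are in canonical form, and that the $a_l$ are short-braid avoiding --- are not strong enough to imply the lemma, so this route cannot be repaired without invoking maximal clustering in an essential way. Concretely, $s_2 s_3 s_2 \cdot s_1 \cdot s_2$ is a \emph{reduced} expression (for $[4312]$) of the form $c_1 a_1$ with $c_1 = s_2 s_3 s_2$ a canonical braid cluster and $a_1 = s_1 s_2$ short-braid avoiding, in which the cluster generator $s_2$ reappears: the slide is blocked by the $s_1$, and the non-reducedness your argument is driving toward is simply false. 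What rules this example out is contractedness ($[4312]$ has two $[321]$-instances but the expression carries only one short braid, and $[4312]$ is a forbidden pattern for maximal clustering); your first argument never uses that hypothesis.

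Your ``more robust alternative'' is in fact the paper's actual proof strategy: pass to the factor of $\w$ ending at the nearest extra occurrence of $s_p$ (which is still maximally clustered), track the $1$-line notation, and exhibit one of the forbidden patterns $[4321]$, $[3421]$, $[4312]$ of \eqref{e:mc.patterns}. But as written it is only a gesture at that argument. The claim that one always obtains a \emph{strictly decreasing} length-four subsequence is wrong in the boundary cases $p=m+1$ and $p=m+k+1$, where the forbidden pattern is $[4312]$ or $[3421]$ rather than $[4321]$; and producing those patterns requires controlling where the relevant values sit by the time the extra $s_p$ is applied, since the intervening letters can move the endpoints of the cluster's pattern instance (for the top generator, for instance, one must first argue that an $s$ indexed just above the cluster has to intervene, and then locate the large value it pulls into the critical position). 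That bookkeeping and case analysis is precisely the content of the proof, and it is missing from the proposal.
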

\begin{proof}
Suppose for the sake of contradiction that there exists a contracted reduced
expression of the form 
\[ \dots s_{m+1} s_{m+2} \dots s_{m+{k-1}} s_{m+k} s_{m+{k-1}} \dots s_{m+2} s_{m+1} \dots s_i \dots \]
where $m+1 \leq i \leq m+k$.  Then, we may choose $s_i$ to be leftmost to obtain
a contracted reduced expression of the form 
\[ \tilde{\w} = s_{m+1} s_{m+2} \dots s_{m+{k-1}} s_{m+k} s_{m+{k-1}} \dots s_{m+2} s_{m+1} \dots s_i \]
in which there are no $s_j$ generators among the entries between the braid
cluster and $s_i$ for any $m+1 \leq j \leq m+k$.  Since this is a factor of a
contracted expression, $\tilde{\w}$ is maximally-clustered by
Definition~\ref{d:mc}.

We examine the 2-line notation that is built up from the identity permutation
by multiplying on the right by $\tilde{\w}$.  The columns of this notation
encode $\block{j \\ w_j}$ for each $j \in \{1, \dots n\}$.  In particular, the
lower row is the usual 1-line notation.  The initial braid cluster produces a
consecutive $[(k+1) 2 3 \dots k 1]$-instance 
\[ \block{ \dots & m+1 & m+2 & m+3 & \dots & m+k & m+k+1 &  \dots \\ \dots & m+k+1 & m+2 & m+3 & \dots & m+k & m+1 & \dots } \]
in positions $(m+1) \dots (m+k+1)$.  

Since there are no $s_j$ among the generators not explicitly shown for any $m+1
\leq j \leq m+k$, the entries in positions $m+2, \dots, m+k$ remain fixed as we
multiply on the right by subsequent entries from $\tilde{\w}$.  Therefore, if
$m+1 < i < m+k$ then $s_i$ creates a descent among the $m+2, \dots, m+k$ entries
so the 1-line notation for $\tilde{\w}$ contains a $[4321]$ instance,
contradicting that $\tilde{\w}$ is maximally-clustered.

Next, suppose that $i = m+k$.  Since there are no $s_{m+k}$ generators between
the braid cluster and $s_i$ in $\tilde{\w}$, the entry with value $m+k$ remains
strictly left of position $m+k+1$, and all of the entries except $m+1$ that lie
strictly right of position $m+k+1$ always have values $> m+k+1$.  Since the
1-line entries in positions $m+k$ and $m+k+1$ are inverted, we cannot apply
$s_{m+k}$ again in a reduced fashion until we first apply an $s_{m+k+1}$.  

Let $x$ be the value of the entry in position $m+k+1$ just after the last
$s_{m+k+1}$ occurs, so $x > m+k+1$ and we have
\[ \block{ \dots & p & \dots & m+k & m+k+1 & \dots & q & \dots \\ 
\dots & (m+k+1) & \dots & m+k & x & \dots & (m+1) & \dots } . \]
Once we apply the last $s_{m+k}$, we obtain a $[3421]$ pattern instance,
contradicting the maximally-clustered hypothesis.  

A similar argument shows that if $i = m+1$ then the 1-line notation for
$\tilde{\w}$ contains $[4312]$ as a permutation pattern, contradicting that
$\tilde{\w}$ is maximally-clustered.  Hence, the generators of every braid
cluster appear uniquely in any contracted reduced expression for a
maximally-clustered permutation.
\end{proof}

\begin{remark}
Although we have emphasized the type $A$ case, there is a definition of
maximally-clustered and freely-braided for all simply-laced Coxeter groups.  It
is not true, even in type $D$, that these conditions imply uniqueness for the
generators in the short-braid instances.  For example, the expression $\w = s_2
s_3 s_2 s_1 s_{\tilde{1}} s_2$ in $D_4$ is contracted and freely-braided, but it
contains an $s_2$ generator beyond the short-braid instance $s_2 s_3 s_2$.
Here, we have labeled the generators so that $s_2$ is adjacent to $s_1$,
$s_{\tilde{1}}$ and $s_3$ in the Coxeter graph.
\end{remark}

In type $A$, there is an algorithm for producing a contracted expression from
the 1-line notation of the permutation, which is useful for creating computer
programs.  We have included a description of this algorithm in
Appendix~\ref{s:appendix}.  This algorithm shows that the number of braid
clusters in a maximally clustered permutation $w$ is precisely the number of
$[(m+2) 2 3 \dots (m+1) 1]$-instances in the 1-line notation for $w$.  Each
such pattern instance contributes $m$ to $N(w)$.


\bigskip
\section{Proof of the main theorem}\label{s:proof}

In this section we will prove Theorem~\ref{t:mc_main} by showing that the set of
{\tt 10*}-avoiding masks is bounded and admissible.  The proof of each of these
properties relies on a map of contracted reduced expression/mask pairs.  Here we
show that the hexagon-avoiding property is preserved under such maps, which
will be used to make inductive arguments in the proofs of
Propositions~\ref{p:bounded} and \ref{p:admissible}.

\begin{lemma}\label{l:hex.induction}
Let $\w$ be a contracted reduced expression for a maximally-clustered
hexagon-avoiding permutation, so $\w$ has the form
\[ \w = a_0 c_1 a_1 \dots a_{M-1} c_{M} a_{M} \]
where each $c_j$ is a braid cluster, and the $a_j$ are short-braid avoiding.
Let $\u$ be any expression obtained from $\w$ by removing some of the entries
from the last braid cluster $c_{M}$ in such a way that 
\[ \u = a_0 c_1 a_1 \dots a_{M-1} \widetilde{c_M} a_M \]
is still reduced and contracted.  Then, the corresponding element $u$ is
hexagon-avoiding.
\end{lemma}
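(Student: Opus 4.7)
The plan is to argue by contrapositive: assume that heap$(\u)$ contains a convex labeled subposet $H$ heap-isomorphic to the hexagon $[46718235]$ under some orientation-preserving Coxeter embedding, and produce a hexagon inside heap$(\w)$, contradicting the hypothesis that $w$ is hexagon-avoiding. The structural starting point is Lemma~\ref{l:unique}: the generators $s_{m+1},\ldots,s_{m+k+1}$ appearing in $c_M$ occur nowhere else in $\w$. Consequently, heap$(\u)$ differs from heap$(\w)$ only by the deleted entries of $c_M\setminus\widetilde{c_M}$, all of which lie in the column range $R=\{m+1,\ldots,m+k+1\}$, and the two heaps agree as labeled sets outside $R$. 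Since $H\subseteq$ heap$(\u)\subseteq$ heap$(\w)$, the natural candidate for a hexagon in heap$(\w)$ is $H$ itself, and it suffices to verify that $H$ remains a convex subposet of heap$(\w)$. Convexity can fail only if some $z\in$ heap$(\w)\setminus H$ is strictly between two entries $x,y\in H$ in the partial order of heap$(\w)$, where $z$ is either a removed element of $c_M$ sitting in $R$, or an element of heap$(\u)\setminus H$ whose comparability with $x$ or $y$ becomes strictly stronger in heap$(\w)$ than in heap$(\u)$ (which can happen because removing entries of $c_M$ may break chains in the heap of $\u$).

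I would then carry out a case analysis on how the columns occupied by $H$ interact with $R$. When $H$ uses no column in $R$, any obstructing $z$ would have to connect to $x$ and $y$ through chains crossing the boundaries at columns $m$ and $m+k+2$, and the rigid pyramid structure of $c_M$ given by Lemma~\ref{l:braid_cluster}, together with convexity of $H$ in heap$(\u)$, should force an unremoved entry of $\widetilde{c_M}$ to lie between $x$ and $y$ already in heap$(\u)$, contradicting the assumption that $H$ avoids $R$. When $H$ does intersect $R$, the intersection $H\cap R$ consists of certain entries of $\widetilde{c_M}$ in prescribed pyramid positions, and I would use the rigid alignment between the pyramid shape of $c_M$ and the heap shape of the hexagon, together with the requirement that $\u$ be reduced and contracted, to either reassemble $H$ into a genuine convex hexagon in heap$(\w)$ by substituting some removed entries for certain entries of $H$, or to derive a direct contradiction with the form of $\widetilde{c_M}$.

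The main obstacle will be the second case, where $H$ partially overlaps the braid cluster. The 14 entries of the hexagon must align with the $2k+1$ entries of $c_M$ in a way compatible with the orientation-preserving Coxeter embedding, imposing strong positional constraints, and the convex-subposet condition inside heap$(\u)$ must then be reconciled with whatever arrangement of removed entries of $c_M$ still appears in heap$(\w)$. The combined rigidity of the hexagon's heap shape, the pyramid shape of $c_M$, and the constraint that $\widetilde{c_M}$ yield a contracted $\u$ should render this a finite combinatorial case check that ultimately produces the desired contradiction.
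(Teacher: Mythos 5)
Your overall strategy---arguing by contrapositive, using Lemma~\ref{l:unique} to localize the difference between the heaps of $\u$ and $\w$ to the columns of $c_M$, and then splitting on whether the hexagon meets those columns---matches the paper's outline, but there are two genuine gaps. First, you assume the hexagon sits as a convex subposet of the heap of the \emph{given} expression $\u$. Heap-containment only guarantees a hexagon in the heap of \emph{some} reduced expression $\tilde{\u}$ for $u$, possibly in a different commutativity class, and for $\tilde{\u}$ your containment $\mathrm{heap}(\u)\subseteq\mathrm{heap}(\w)$ is unavailable. The paper bridges this by invoking Losonczy's Corollary 4.3.3: since $u$ is maximally clustered, the heaps of $\u$ and $\tilde{\u}$ differ only in the commutativity classes chosen for the braid clusters, so away from $c_M$ one may re-choose the classes of $c_1,\dots,c_{M-1}$ inside $\w$ to match $\tilde{\u}$. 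Some such statement is needed before your argument can even begin.

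Second, and more seriously, in the overlap case the right conclusion is neither that ``$H$ itself remains convex'' nor that one can ``substitute removed entries for entries of $H$.'' The paper first pins down the overlap: the hexagon is fully commutative, so every minimal pair of its entries in a common column requires a distinct resolution, whereas by the uniqueness condition in Definition~\ref{d:mc} no minimal pair inside a braid cluster has one; hence the hexagon can meet the cluster's column interval $[p,q]$ only in a single shared extreme column, carrying exactly one hexagon entry (the $s_{i+7}$ entry in column $p$, or the $s_{i+1}$ entry in column $q$). In the former case the canonical pyramid form of $c_M$ places \emph{two} cluster entries in column $p$, both sandwiched between the two hexagon entries of column $p-1$, so the candidate $14$-entry set is genuinely not convex in that heap of $\w$, and no selection of removed entries repairs it. The resolution is to pass to the reversed commutativity class $s_{q} s_{q-1}\cdots s_{p+1} s_{p} s_{p+1}\cdots s_{q-1} s_{q}$ of $c_M$, which has a single entry in column $p$ and completes a convex hexagon in a \emph{different} heap of $w$. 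Your framework works with one fixed heap of $\w$ and defers the hard case to an unspecified ``finite combinatorial case check''; without the commutativity-class flip that check cannot terminate in the desired contradiction.
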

\begin{proof}
We suppose that $u$ contains a hexagon pattern and show that $w$ must also
contain a hexagon pattern.  If $u$ contains a hexagon then there is some
reduced expression $\tilde{\u}$ for $u$ such that the heap of $\tilde{\u}$
contains a hexagonal subheap
\[ \xymatrix @=-4pt @! {
& & {\hf} & & {\hf} & & {\hs} & \\
& {\hf} & & {\hf} & & {\hf} & & {\hs} \\
{\hf} & & {\hf} & & {\hf} & & {\hf} & \\
& {\hf} & & {\hf} & & {\hf} & & {\hs} \\
& & {\hf} & & {\hf} & & {\hs} & 
} \]
in columns $\{ 1+i, \dots, 7+i \}$ for some $i \geq 0$.  

Note that although we assume $\u$ is a contracted reduced
expression, our notation $a_0 c_1 a_1 \dots a_{M-1} \widetilde{c_M} a_M$ might
not represent the partition of this expression into braid clusters, because the
factors in this notation were defined with respect to $\w$.  In particular,
$a_{M-1} \widetilde{c_M} a_M$ may be fully-commutative.  In any case, every
other braid cluster $c_i$ of $\w$ where $1 \leq i \leq M-1$ remains a braid
cluster in $\u$, and we assume these are in the canonical form of
Lemma~\ref{l:braid_cluster}.  Also, since $u$ is maximally-clustered, the heaps
of $\u$ and $\tilde{\u}$ differ only in the choice of commutativity classes for
their braid clusters by \cite[Corollary 4.3.3]{losonczy}.  

Let $[p,q]$ be the interval of columns that support the braid cluster $c_M$ in
the heap of $\w$.  By Lemma~\ref{l:unique}, we have that the heaps of $\w$ and
$\u$ agree on the columns outside of $[p,q]$.  In particular, if the hexagon
instance in $\tilde{\u}$ does not use any entries from columns $[p,q]$, then
the hexagon instance appears in the reduced expression for $w$ that is obtained
by choosing the commutativity class of each braid cluster $c_1, \dots, c_{M-1}$
to match the commutativity class of each such braid cluster in $\tilde{\u}$.
As this contradicts the hypothesis that $w$ is hexagon-avoiding, we have that
the hexagon instance uses some entry of $\tilde{\u}$ from columns $[p,q]$.

Since the hexagon is fully-commutative, every minimal pair of entries in each
column has a distinct resolution.  However, by the uniqueness in
Definition~\ref{d:mc}, no minimal pair of entries in any braid cluster has a
distinct resolution.  Thus, we find that the hexagon instance in $\tilde{\u}$
either includes an entry from column $p$ that corresponds to $s_{i+7}$, or it
includes an entry from column $q$ that corresponds to $s_{i+1}$.  Let us suppose
that we have the former case without loss of generality.  Since we only remove
entries from columns $[p,q]$ as we pass from $\w$ to $\tilde{\u}$, we find that
the entries of $\w$ in column $p-1$ that are used in the hexagon instance of
$\tilde{\u}$ surround the entries from the braid cluster in column $p$.

Hence, if we choose the commutativity class for the braid cluster $c_{M}$ which
has the form
\[ s_{q} s_{q-1} \dots s_{p+1} s_{p} s_{p+1} \dots s_{q-1} s_{q} \]
then the corresponding heap contains a single entry in column $p$, and so we
obtain a hexagon instance in this heap for $w$.  This contradicts our hypothesis
that $w$ is hexagon-avoiding.  An illustration for the case when the hexagon
instance uses $s_{7+i}$ from $c_{M}$ is given below.  The ${\color{blue} \hh}$
points are entries from the braid cluster $c_M$ in the heap of $\w$.
\[
\xymatrix @=-4pt @! {
& & {\hs} & & {\hf} & & {\hs} & \\
& {\hs} & & {\hs} & & {\hf} & & {\hs} \\
{\hs} & & {\hs} & & {\hs} & & {\color{blue} \hh} & \\
& {\hs} & & {\hs} & & {\hs} & & {\color{blue} \hh} \\
{\hs} & & {\hf} & & {\hs} & & {\hs} & & {\color{blue} \hh} & \\
& {\hf} & & {\hf} & & {\hs} & & {\color{blue} \hh} \\
{\hf} & & {\hf} & & {\hf} & & {\color{blue} \hh} & \\
& {\hf} & & {\hf} & & {\hf} & & {\hs} \\
& & {\hf} & & {\hf} & & {\hs} & \\
} 
\parbox[t]{0.5in}{ \vspace{0.2in} \hspace{0.2in} $ = $ }
\xymatrix @=-4pt @! {
& & {\hf} & & {\hf} & & {\hs} & \\
& {\hf} & & {\hf} & & {\hf} & & {\hs} \\
{\hf} & & {\hf} & & {\hf} & & {\color{blue} \hh} & \\
& {\hs} & & {\hs} & & {\hs} & & {\color{blue} \hh} \\
{\hs} & & {\hs} & & {\hs} & & {\hs} & & {\color{blue} \hh} & \\
& {\hs} & & {\hs} & & {\hs} & & {\color{blue} \hh} \\
& & {\hs} & & {\hs} & & {\color{blue} \hh} & \\
& {\hf} & & {\hf} & & {\hf} & & {\hs} \\
& & {\hf} & & {\hf} & & {\hs} & \\
} 
\parbox[t]{0.5in}{ \vspace{0.2in} \hspace{0.2in} $\rightarrow$ }
\xymatrix @=-4pt @! {
& & {\hf} & & {\hf} & & {\hs} & & {\color{blue} \hh} \\
& {\hf} & & {\hf} & & {\hf} & & {\color{blue} \hh} \\
{\hf} & & {\hf} & & {\hf} & & {\color{blue} \hh} & \\
& {\hf} & & {\hf} & & {\hf} & & {\color{blue} \hh} \\
& & {\hf} & & {\hf} & & {\hs} & & {\color{blue} \hh} \\
}
\]
\end{proof}

In the proof of the next result, we \em decorate \em the heap diagrams according
to mask-value using Table~\ref{ta:1}.

\bigskip
\begin{center}
\begin{table}[ht]
\centering
\caption{Heap decorations}
\begin{tabular}{|p{0.7in}|p{3in}|} 
\hline
Decoration & Mask-value \\
\hline
$\hd$  &	zero-defect entry\\
$\hz$  &	plain-zero entry (not a defect)\\
$\hf$  &	mask-value 1 entry\\
\hline
\end{tabular}
\label{ta:1}
\end{table}
\end{center}
\bigskip 

\begin{proposition}\label{p:bounded}
Let $\w$ be a contracted expression for a maximally-clustered hexagon-avoiding
permutation $w$.  Then, the set of {\tt 10*}-avoiding masks on $\w$ is bounded.
\end{proposition}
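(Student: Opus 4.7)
My plan is to induct on the number $M$ of braid clusters in the contracted expression $\w = a_0 c_1 a_1 \cdots c_M a_M$. The base case $M = 0$ forces $N(w) = 0$, so $w$ is simultaneously fully-commutative and hexagon-avoiding. The {\tt 10*}-condition is then vacuous, $\E_\w$ equals the full mask set $\mathcal{S}$, and boundedness is precisely the content of Billey--Warrington's Theorem~\ref{t:b-w}.

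For the inductive step with $M \ge 1$, I would fix a proper mask $\s \in \E_\w$ and concentrate on the last braid cluster $c_M$. By Lemma~\ref{l:braid_cluster} it has the canonical form $s_{m+1} s_{m+2} \cdots s_{m+k+1} \cdots s_{m+2} s_{m+1}$, and its central braid $s_{m+k} s_{m+k+1} s_{m+k}$ occupies some positions $(j,j+1,j+2)$ of $\w$. The {\tt 10*}-avoidance leaves only the six allowed patterns $(0,0,0)$, $(0,0,1)$, $(0,1,0)$, $(0,1,1)$, $(1,1,0)$, $(1,1,1)$ on those three positions. For each pattern I would define a reduction $(\w,\s) \mapsto (\u,\s')$, where $\u$ is obtained from $\w$ by deleting a symmetric block of entries of $c_M$ (the zero entries of the central braid together with matched pairs of wing entries chosen so the resulting expression is still reduced and contracted), and $\s'$ is the restriction of $\s$ to the surviving positions. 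Lemma~\ref{l:unique} ensures the deleted generators do not recur elsewhere in $\w$, and Lemma~\ref{l:hex.induction} ensures that $\u$ is still a contracted expression for a maximally-clustered hexagon-avoiding permutation, so the inductive hypothesis applies to $\s'$ on $\u$; the corner case $\s'=(1,\dots,1)$ must be verified by direct inspection.

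The main obstacle is the bookkeeping needed to propagate the Deodhar inequality for $(\u,\s')$ back to $(\w,\s)$, that is, to establish
\[
\#\text{plain-zeros}(\s) - \#\text{zero-defects}(\s) \;\ge\; \#\text{plain-zeros}(\s') - \#\text{zero-defects}(\s').
\]
The subtlest configurations are the central-braid patterns $(1,1,0)$ and $(0,1,1)$, in which the single zero on the central braid may or may not be a defect depending on the partial product $\w^{\s[j-1]}$. In these cases the plan is to use the heap decorations of Table~\ref{ta:1}, combined with the short-braid-avoidance of the two wings $s_{m+1}\cdots s_{m+k-1}$ and $s_{m+k-1}\cdots s_{m+1}$ of $c_M$, to exhibit an unmatched plain-zero among the wing entries whenever the central-braid zero is in fact a defect. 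A secondary routine check is that $\s'\in\E_\u$, which is automatic because the deletion affects only $c_M$ and thus leaves the {\tt 10*}-avoidance on the surviving braid clusters untouched. Should the six-pattern case split turn out to be too bulky, a fallback is to induct on the length of $\w$ rather than on $M$, peeling off one entry at a time from the right end of $c_M$ and reinvoking Lemma~\ref{l:hex.induction} at each step.
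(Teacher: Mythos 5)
Your overall architecture matches the paper's: peel the last braid cluster off $\w$, invoke Lemma~\ref{l:unique} and Lemma~\ref{l:hex.induction} to stay inside the maximally-clustered hexagon-avoiding class, and bottom out at Theorem~\ref{t:b-w}. (The paper phrases this as a contradiction argument --- start from a proper mask violating the Deodhar bound and push the violation down to a short-braid-avoiding expression --- rather than a direct induction on $M$, but that is only a reframing of your inequality.) However, the step you defer as ``bookkeeping'' is the entire content of the proof, and your specific plan for it is aimed at the wrong target. First, the reduction is not well defined as stated: deleting an inner matched pair of wing entries while keeping the outer ones destroys reducedness (e.g.\ $s_1s_2s_3s_2s_1 \mapsto s_1s_3s_1$ is not reduced), and when the central braid carries mask $(1,1,1)$ there are no central zeros to delete, so it is unclear that your recursion makes progress. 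The paper instead always removes the \emph{entire} descending wing (the last $k$ entries of $c_M$), turning the cluster into a fully commutative segment in one stroke.

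Second, the delicate point is not whether the single central-braid zero is a defect; it is the zero-defects sitting at the \emph{tops of the wing columns} (the second occurrences of $s_m,\dots,s_{m+k-2}$), which are removed by the reduction and must each be paid for by a removed plain-zero. This is precisely where {\tt 10*}-avoidance is used: it forces the right string of such a defect to turn around at a column $g$ of the cluster both of whose entries carry mask-value $0$, and neither of which can itself be a zero-defect, producing the compensating plain-zero (Figure~\ref{f:bounded_heaps}(a)). Your case split on the six allowed central-braid patterns never sees these defects. Finally, the claim that the external entries are handled automatically is false in the direction that matters: if a zero-defect lying to the right of $c_M$ lost its defect status under the deletion, your inequality
\[ \#\text{plain-zeros}(\s) - \#\text{zero-defects}(\s) \;\ge\; \#\text{plain-zeros}(\s') - \#\text{zero-defects}(\s') \]
would fail. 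Preserving those defects requires its own string analysis, and in one configuration (Figure~\ref{f:bounded_heaps}(c)) the paper must actually \emph{alter} the mask on the surviving part of the cluster, interchanging the mask-values of two entries, to keep the string dynamics intact. Until the reduction is pinned down and these two matching arguments are supplied, the proof has a genuine gap.
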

\begin{proof}
Suppose that there exists some proper {\tt 10*}-avoiding mask $\s$ on $\w$ with
\[ \text{ \# zero-defects of } \s \geq \text{ \# plain-zeros of } \s \]
which violates Equation~(\ref{e:deodhar.ineq}).  In particular, $\s$ contains at
least one zero-defect because it is a proper mask.  We show how to extend this
mask to an element with one fewer braid cluster while maintaining the
non-Deodhar bound.  Eventually, we derive a contradiction by
Theorem~\ref{t:b-w}.

We may adorn the heap diagram of the permutation with strings that correspond to
entries in the 1-line notation for the permutation.  This construction is a
standard technique which is given a detailed description in \cite{b-j1}.  We can
consider a pair of strings emanating from each entry of the decorated heap such
that the strings cross at mask-value 1 entries and bounce at mask-value 0
entries.  It follows from the definition that a defect entry must have a pair of
strings that cross an odd number of times below the defect.  Suppose the last
braid cluster has length $2 k + 1$ with entries occupying columns $m, m+1,
\dots, m+k$ of the heap, and it is in the form given by
Lemma~\ref{l:braid_cluster}.  Let $\mathcal{B}(\w)$ denote the reduced
expression obtained from $\w$ by removing the last $k$ entries of this last
braid cluster.  By Lemma~\ref{l:unique} and Definition~\ref{d:mc}, we find that
$\mathcal{B}(\w)$ is a contracted reduced expression for a maximally clustered
permutation with one fewer braid cluster.  We describe how to construct a
non-Deodhar mask on $\mathcal{B}(\w)$, starting from the restriction of $\s$ to
$\mathcal{B}(\w)$.

First, observe that we remove at least as many plain-zeros as zero-defects from
columns $m, m+1, \dots, m+k-1$ when we apply $\mathcal{B}$.  To see this,
suppose there exists a zero-defect at the top of column $h$ where $m \leq h \leq
m+k-1$.  Then the strings for the defect must cross below the defect.  By
Lemma~\ref{l:unique} and \ref{l:braid_cluster}, the form of the heap in columns
$m, \dots, m+k$ is determined as shown in Figure~\ref{f:bounded_heaps}(a).  In
particular, there can be no entries in column $m-1$ lying between the two
entries in column $m$ by Definition~\ref{d:mc}.

\begin{figure}[h]
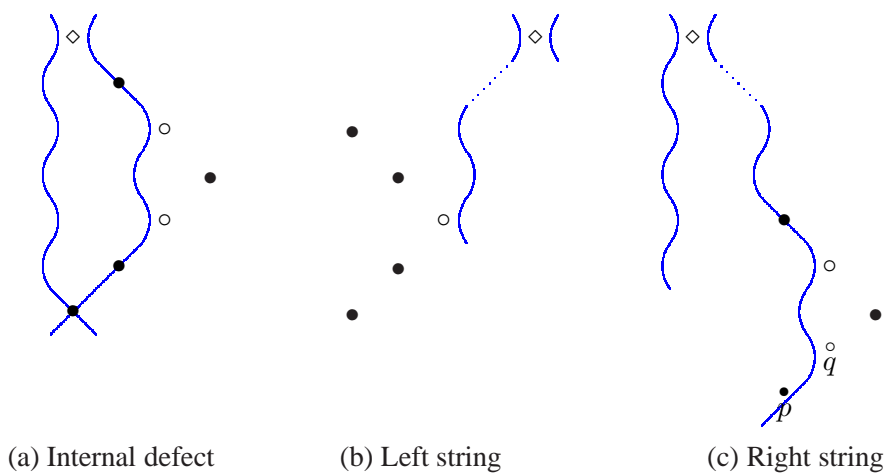

\begin{center}
\begin{tabular}{ccc}
\heap {
& \StringLR{\hd} & & {\hs}  \\
\StringR{\hs} & & \StringLX{\hf}  \\
& \StringL{\hs} & & \StringL{\hz}  \\
\StringR{\hs} & & \StringR{\hs} & & {\hf} \\
& \StringL{\hs} & & \StringL{\hz}  \\
\StringR{\hs} & & \StringRX{\hf}  \\
& \StringLRX{\hf} & & {\hs}  \\
} &
\heap {
& & {\hs} & & {\hs} & & \StringLR{\hd} & \\
& {\hs} & & {\hs} & & \StringRXD{\hs} & & {\hs} \\
& & {\hf} & & \StringR{\hs} & & {\hs} & \\
& {\hs} & & {\hf} & & \StringL{\hs} & & {\hs} \\
{\hs} & & {\hs} & & \StringR{\hz} & & {\hs} & \\
& {\hs} & & {\hf} & & {\hs} & & {\hs} \\
{\hs} & & {\hf} & & {\hs} & & {\hs} & \\
} &
\heap {
& \StringLR{\hd} & & {\hs} & & {\hs} & & {\hs} \\
\StringR{\hs} & & \StringLXD{\hs} & & {\hs} & & {\hs} & \\
& \StringL{\hs} & & \StringL{\hs} & & {\hs} & & {\hs} \\
\StringR{\hs} & & \StringR{\hs} & & {\hs} & & {\hs} & \\
& \StringL{\hs} & & \StringLX{\hf} & & {\hs} & & {\hs} \\
\StringR{\hs} & & {\hs} & & \StringL{\hz} & & {\hs} & \\
& {\hs} & & \StringR{\hs} & & {\hf} & & {\hs} \\
{\hs} & & {\hs} & & \StringL{\stackrel{\hz}{q}} & & {\hs} & \\
& {\hs} & & \StringRX{\stackrel{\hf}{p}} & & {\hs} & & {\hs} \\
} \\
(a) Internal defect &
(b) Left string &
(c) Right string \\
\end{tabular}
\end{center}
\caption{{\tt 10*}-avoiding masks are bounded}
\label{f:bounded_heaps}
\end{figure}

Since the mask is {\tt 10*}-avoiding, the right string of the defect must travel
southeast from the defect until it hits a zero in column $g$ with $h < g <
m+k$, drop straight down until it hits the next entry in the heap which must
also be a zero in the same column $g$, and then continue southwest until it
crosses the left string of the defect at the bottom entry in column $h$.
Hence, both of the entries in column $g$ must have mask-value 0 to facilitate
the string crossing for the defect.  Neither entry in column $g$ can be a
zero-defect, because such a defect must have a mask-value 1 entry directly below
it in the same column to facilitate the string crossing.  As we already assumed
that the mask values of both entries in column $g$ were 0, the lower entry is
not a critical generator.  Thus, removing the top entries in columns $m, m+1,
\dots m+k-1$ removes a plain-zero for every defect, so the non-Deodhar bound for
the mask $\s$ restricted to $\mathcal{B}(\w)$ is preserved.

Next, we must consider whether the removal of the last $k$ entries from the last
braid cluster might destroy the defect status of an entry further to the right
in the contracted reduced expression.  We will argue that it cannot.  Any
zero-defect whose left string intersects the braid cluster will have the same
string dynamics after we apply $\mathcal{B}$ as it did originally since we do
not change the mask-values of the remaining entries in the braid cluster.  This
is illustrated in Figure~\ref{f:bounded_heaps}(b).  Hence, applying
$\mathcal{B}$ does not destroy the defect status of such a zero-defect.

Suppose the right string of a zero-defect intersects the braid cluster as in
Figure~\ref{f:bounded_heaps}(c).  Then, the path of this string is prescribed as
in the argument above.  It must intersect a zero in column $g$ with $g < m+k$,
drop straight down until it hits the next entry in the heap which must also be a
zero in the same column $g$, and then continue southwest until it leaves the
braid cluster and eventually crosses the left string of the zero-defect.  No
other mask configuration will allow the strings of the zero-defect to cross by
Lemma~\ref{l:unique}.

Observe that since there are no entries in column $m-1$ between the two entries
in column $m$, the strings cannot cross at $p$.  If the mask-value of $p$ is 1,
then we change the mask for $\mathcal{B}(\w)$ by interchanging the mask-values
of $p$ and $q$.  Then the string dynamics for the defect will remain the same
after we apply $\mathcal{B}$, and we preserve the non-Deodhar bound of the mask.

After iteratively applying $\mathcal{B}$ to remove each braid cluster of $\w$,
we eventually obtain a short-braid avoiding reduced expression that is still
hexagon-avoiding by Lemma~\ref{l:hex.induction}.  The argument above shows that
we have a proper mask on the resulting expression in which no more defects than
plain-zeros have been removed in comparison with the original mask on $\w$.
But Theorem~\ref{t:b-w} implies that every proper mask on an element that is
short-braid avoiding and hexagon-avoiding must satisfy the Deodhar bound from
Equation~(\ref{e:deodhar.ineq})
\[ \text{ \# zero-defects of } \s < \text{ \# plain-zeros of } \s . \]
This contradicts the existence of our original non-Deodhar mask on $\w$.
\end{proof}

\begin{proposition}\label{p:admissible}
Let $\w$ be a contracted expression for a maximally-clustered hexagon-avoiding
permutation.  Then, the set of {\tt 10*}-avoiding masks on $\w$ is admissible.
\end{proposition}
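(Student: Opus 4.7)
I would begin by verifying conditions (1) and (2) of Definition~\ref{d:admissible} directly. Condition (1) is immediate: the all-ones mask has no zero entries, hence no {\tt 10*}-instance. For condition (2), the {\tt 10*}-avoidance condition constrains only the mask values at positions $k$ and $k+1$ of each braid cluster of length $2k+1$ in the canonical form of Lemma~\ref{l:braid_cluster}. Since $k \geq 1$, these positions are strictly interior to their cluster, and in particular are never the last position of $\w$---that last position lies either in $a_M$ (outside every cluster) or at position $2k+1$ of $c_M$, strictly past both constrained positions. Flipping $\s_k$ therefore preserves membership in $\E_{\w}$, so $\E_{\w} = \tilde{\E}_{\w}$.

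For condition (3), I would induct on the number $M$ of braid clusters of $\w$. In the base case $M = 0$, $\w$ is short-braid avoiding, so $\E_{\w} = \mathcal{S}$ and admissibility follows from Theorem~\ref{t:b-w}. For the inductive step, write $\w = \u\, c_M\, a_M$ as in Proposition~\ref{p:bounded}, and apply the map $\mathcal{B}$ that replaces $c_M$ by its strictly-increasing first half $s_{m+1} s_{m+2} \cdots s_{m+k+1}$; by Lemmas~\ref{l:unique} and~\ref{l:hex.induction}, $\mathcal{B}(\w)$ is a contracted reduced expression for a maximally-clustered hexagon-avoiding permutation with $M-1$ braid clusters, so the inductive hypothesis gives bar-invariance of $h(\E_{\mathcal{B}(\w)})$. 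The plan is to organize $h(\E_{\w})$ by grouping {\tt 10*}-avoiding masks on $\w$ by their restriction to $\mathcal{B}(\w)$, and then to show that the local contribution from the removed tail of $c_M$, subject to the {\tt 10*}-constraint on its central braid, assembles into a bar-invariant factor.

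The principal obstacle is that $\E_{\mathcal{B}(\w)}$ does not inherit the {\tt 10*}-constraint from $c_M$, since the central braid of $c_M$ is dismantled by $\mathcal{B}$. Fortunately, Lemma~\ref{l:unique} ensures that the generators of $c_M$ appear nowhere else in $\w$, so mask choices on $c_M$ interact with those on $\u$ and $a_M$ only through the boundary generators at columns $m$ and $m+k+2$, which makes a local analysis plausible. Concretely, I expect to perform a case analysis on the values $(\s_k, \s_{k+1}, \s_{k+2})$ at the central braid, using the string-diagram dynamics of Proposition~\ref{p:bounded} to track defect statistics inside $c_M$ together with the Hecke algebra identity $(1+T_s)^2 = (q+1)(1+T_s)$ to collapse repeated factors. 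A potentially cleaner route is an inclusion-exclusion argument: since $h(\mathcal{S}_{\w}) = q^{-l(w)/2} \prod_j (1 + T_{\w_j}) = \prod_j q^{-1/2}(1 + T_{\w_j})$ is manifestly bar-invariant, it suffices to show that the excluded contribution $h(\mathcal{S}_{\w}) - h(\E_{\w})$, which enumerates masks with at least one {\tt 10*}-instance, decomposes via inclusion-exclusion over the set of central braids into a signed sum of bar-invariant Hecke-algebra expressions. The delicate point in either route is controlling how forcing the mask values $(1,0)$ on the first two positions of a central braid interacts with the defect statistic on the interior and boundary of that cluster.
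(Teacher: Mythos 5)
Your verification of conditions (1) and (2) of Definition~\ref{d:admissible} is correct and matches the paper's (briefer) argument: the all-ones mask has no zeros, and the {\tt 10*}-condition only constrains the first two positions of a central braid, which can never be the final position of $\w$. Your closing suggestion --- pass to the complement using the bar-invariance of $h(\mathcal{S}) = C_{\w_1}'\cdots C_{\w_{l(\w)}}'$ and handle the masks that \emph{do} contain a {\tt 10*}-instance --- is in fact exactly the paper's first move for condition (3). But at that point your proposal stops where the real work begins: you assert that the excluded contribution ``decomposes into a signed sum of bar-invariant Hecke-algebra expressions'' without any mechanism for why the individual pieces are bar-invariant, and that is precisely the content of the proposition. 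The paper partitions $\E_{\w}^c$ as $\bigsqcup_B \E_{\w}^{B}$ over the exact sets $B$ of positions where {\tt 10*}-instances occur, and then constructs a map $\f$ that collapses the \emph{maximal symmetric segment} $s_i\cdots s_{i+k-1}\,s_{i+k}\,s_{i+k-1}\cdots s_i$ of mask values $1\cdots 1\,0\,1\cdots 1\,*$ around the rightmost instance down to a single entry $s_i$ with mask value $1-*$. One checks (using Lemma~\ref{l:unique} so that no other $s_j$ with $i\le j\le i+k$ interferes) that this is a bijection $\E_{\w}^{B}\to\E_{\f(\w)}^{B\setminus\mathbf{max}(B)}$ which removes exactly $k$ defects while shortening the word by $2k$, whence $h(\E_{\w}^{B}) = h(\E_{\f(\w)}^{B\setminus\mathbf{max}(B)})$; iterating lands in a set of {\tt 10*}-avoiding masks on a shorter contracted expression, where induction applies. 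This collapsing map, and the defect bookkeeping that makes $h$ invariant under it, is the missing idea; your appeal to $(1+T_s)^2=(q+1)(1+T_s)$ gestures at the right phenomenon but is not developed into the needed cancellation.

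Two further points would need repair even granting the above. First, your primary route --- grouping {\tt 10*}-avoiding masks by their restriction to $\mathcal{B}(\w)$ --- is unlikely to close: the {\tt 10*}-constraint couples two positions in the \emph{interior} of $c_M$ (both of which survive in $\mathcal{B}(\w)$), so the fibers of the restriction map do not supply a common bar-invariant right factor, and the image in $\mathcal{S}_{\mathcal{B}(\w)}$ is not $\E_{\mathcal{B}(\w)}$; the generating function simply does not factor through $\mathcal{B}(\w)$. Second, your induction is on the number $M$ of braid clusters, but the collapsed segment in the paper's map $\f$ need not be an entire braid cluster (the symmetric run of $1$'s may terminate early), so a single application of $\f$ need not reduce $M$. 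The paper instead inducts on the number $N(w)$ of short-braids, which $\f$ provably decreases. An inclusion-exclusion over central braids in place of the exact-set partition faces the same underlying problem: each term still requires the collapsing argument to be seen as bar-invariant.
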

\begin{proof}
We let $\E_{\w}$ denote the set of {\tt 10*}-avoiding masks on $\w$.  In order
for $\E_{\w}$ to be admissible, we must show that it satisfies the three
properties in Definition~\ref{d:admissible}.

The first property follows since the mask $\s = (1, 1, \dots, 1)$ avoids {\tt
10*}, and the second property holds because avoiding {\tt 10*} imposes no
restrictions on the last entry of a mask.

In order to show that $h(\E_{\w})$ is invariant under the Hecke algebra
involution, we will use that
\[ h(\E_{\w}) = \overline{ h(\E_{\w}) } \text{ if and only if } h(\E_{\w}^c) = \overline{ h(\E_{\w}^c) } \]
where $\E_{\w}^c$ denotes the set complement $\mathcal{S} \setminus \E_{\w}$.
Indeed, Deodhar observes in \cite[Proposition 3.5]{d} that $h(\mathcal{S}) =
C_{\w_1}' C_{\w_2}' \dots C_{\w_{l(\w)}}'$, so in particular $h(\mathcal{S}) =
\overline{ h(\mathcal{S}) }$.  Since $\mathcal{S} = \E_{\w} \sqcup \E_{\w}^c$,
we have $h(\E_{\w}) = h(\mathcal{S}) - h(\E_{\w}^c)$, so by linearity any set
of masks is invariant under the involution whenever its complement is.

We proceed by induction on the number $N(w)$ of short-braids in the contracted
expression $\w$.  If $\w$ is short-braid avoiding, then $\E_{\w} =
\mathcal{S}$, and this set is admissible.

Next, suppose that $\w$ has $N(\w)$ short-braids, and that $h(\E_{\v}) =
\overline{ h(\E_{\v}) }$ whenever $\v$ is a contracted expression for a
maximally-clustered hexagon-avoiding permutation with fewer than $N(\w)$
short-braids.  We will prove that $h(\E_{\w}^c)$ is invariant under the
involution.

Suppose $B \subset \{1, \dots, l(\w)\}$ is a set of positions in $\w$.  Let
$\E_{\w}^{B}$ denote the set of masks $\s$ on $\w$ such that $\s$ has a {\tt
10*} instance starting at position $b$ if and only if $b \in B$.  Not every
subset can correspond to a set of masks; for example, if $B$ includes positions
that do not correspond to central braids in braid clusters of $\w$ then
$\E_{\w}^{B}$ will necessarily be the empty set.  In such a situation we define
$h(\E_{\w}^{B}) = 0$.

Observe that we can decompose $\E_{\w}^c$ as $\sqcup \E_{\w}^{B}$, where we
take the disjoint union over all non-empty sets of positions.  Then, we have
\[ h(\E_{\w}^c) = \sum_{\text{non-empty position sets } B } h( \E_{\w}^{B} ) \]
so it suffices by linearity to show that
\[ h( \E_{\w}^{B} ) = \overline{ h( \E_{\w}^{B} ) } \]
for each non-empty set of positions $B$.

Towards this end, we define a map $\f$ on reduced expression/mask pairs $(\w,
\s)$.  We denote the image of $\w$ under the map by $\f(\w)$ via an abuse of
notation since $\f(\w)$ depends on the mask $\s$.  Then, 
\[ \f : \{ \w \} \times \E_{\w}^{B} {\longrightarrow} \{ \f(\w) \} \times
\E_{\f(\w)}^{B \setminus \mathbf{max}(B)} \] 
is defined by:
\begin{align*}
\pblock{
\dots & s_{i} & \dots & s_{i+k-1} & s_{i+k} & s_{i+k-1} & \dots & s_{i+1} & s_{i} & \dots \\ 
\dots & 1 & \dots & 1 & 0 & 1 & \dots & 1 & * & \dots \\ 
& \scriptstyle{ b_1-k+1 } & \dots & \scriptstyle{ b_1 } & \scriptstyle{ b_1 + 1
} & \scriptstyle{ b_1 + 2 } & \dots & \scriptstyle{ b_1+k } & \scriptstyle{
b_1+k+1 } \\ 
} 
\stackrel{\f}{\mapsto} 
\pblock{
\dots & s_{i} & \dots \\ 
\dots & (1-*) & \dots \\  
\dots & \scriptstyle{ b_1-k+1 } & \dots \\
}
\end{align*}
where the notation indicates the mask-value and location of each entry on the
second and third rows of the array, respectively.  The segment between indices
$b_1-k+1$ and $b_1+k+1$ is chosen to be the maximal segment that has symmetric
intervals of mask-value 1 entries about the central braid.  In particular, the
segment may not be the entire braid-cluster instance in $\w$.  In the
definition, $b_1 = \mathbf{max}(B)$ is the rightmost position in $B$, and
$(1-*)$ indicates that we take the mask-value on $s_i$ that is opposite to the
mask-value given on the second $s_i$ generator in location $b_1+k+1$ of the {\tt
10*} instance.  In the image, the entire segment between indices $b_1-k+1$ and
$b_1+k+1$ is replaced by the single entry at location $b_1-k+1$.

Observe that applying this move $\f$ to the reduced expression/mask pairs
$(\w,\s) \in \{ \w \} \times \E_{\w}^{B}$ has the following effects:
\begin{enumerate}
	\item[(1)] The map $\f$ gives a bijection between $\E_{\w}^{B}$ and
		$\E_{\f(\w)}^{B \setminus \mathbf{max}(B)}$ because it is
		reversible.
	\item[(2)] $\f(\w)$ is a contracted reduced expression for a
		maximally-clustered hexagon-avoiding permutation by
		Lemma~\ref{l:hex.induction} and Lemma~\ref{l:unique}, with
		exactly $k$ fewer short-braid instances than $\w$, and
		$l(\f(\w)) = l(\w) - 2 k$.
	\item[(3)] $\f$ removes exactly $k$ defects from the mask $\s$ on $\w$,
		because $w^{\s[b_1+k+1]} = \f(\w)^{\f(\s)[b_1-k+1]}$, so the defect
		status of subsequent entries remains precisely the same.
	\item[(4)] $w^{\s} = \f(\w)^{\f(\s)}$ follows from (3).
	\item[(5)] The map $\f$ introduces no new {\tt 10*}-instances because
		the choice of the segment $(b_1-k+1, b_1+k+1)$ about $b_1$ is maximal.
\end{enumerate}

To verify (3), note that by Lemma~\ref{l:unique} there are no other $s_j$
generators for $i \leq j \leq i+k$ anywhere else in the contracted expression
$\w$.  Hence, the second $s_{i+k-1}, \dots, s_i$ generators are always defects,
while none of the entries in the first segment are.  Since $s_{i+k}$ has
mask-value 0, removing this entry has no effect on the defect status of any
subsequent entries in $\w$.  Furthermore, we can also remove the other entries
and update the mask as shown without changing the defect status of any
subsequent entry.  This follows because if both $s_i$ entries had mask-value 1,
then they would cancel after the collapse of the pairs of generators $s_{i+1},
\dots, s_{i+k-1}$ in any calculation of the defect status of an entry further to
the right.  If the second $s_i$ had mask-value 0, then it would play no role in
the calculation of the defect status of an entry further to the right, so could
be removed.

Next, we may calculate
\begin{align*}
h(\E_{\w}^{B}) 
&= q^{-{1 \over 2} l(w)} \sum_{ \s \in \pblock{ \dots & s_{i} & \dots & s_{i+k-1} & s_{i+k} & s_{i+k-1} & \dots & s_{i+1} & s_{i} & \dots \\ \dots & 1 & \dots & 1 & 0 & 1 & \dots & 1 & * & \dots \\ }} q^{d(\s)} T_{\w^{\s}} \\ 
\ \\
&= q^{-{1 \over 2} (l(w))} \sum_{ \s \in \pblock{\dots & s_i & \dots \\ * &
(1-*) & * \\
} } q^{(d(\s)+k)} T_{\f(\w)^{\s}} \\
&= q^{-{1 \over 2} (l(w)-2 k)} \sum_{ \s \in \pblock{\dots & s_i & \dots \\ * &
(1-*) & * \\
} } q^{d(\s)} T_{\f(\w)^{\s}} \\
\end{align*}
to see that 
\[ h( \E_{\w}^{B} ) = h( \E_{\f(\w)}^{B \setminus \mathbf{max}(B)} ). \]

If we let $m = |B|$ then the masks in $\E_{\w}^{B}$ all have exactly $m$ {\tt
10*}-instances.  Therefore, after $m$ applications of the map $\f$ we will
obtain a set $\f^m( \E_{\w}^{B} )$ of {\tt 10*}-avoiding masks on some
contracted expression $\f^m(\w)$ that has \em at least $m$ fewer short-braids than
$\w$\em.  This follows because we always remove at least one short-braid instance from
$\w$ every time we apply $\f$.  In particular, since $m > 0$, we have that $h(\f^m( \E_{\w}^{B} ))$ is
known to be invariant under the involution by induction.

Thus, $h(\E_{\w}^c) = \overline{ h(\E_{\w}^c) }$ as well.
\end{proof}

This completes the proof of Theorem~\ref{t:mc_main}, and shows that there is a
non-recursive algorithm to compute the Kazhdan--Lusztig basis elements associated
to maximally-clustered hexagon-avoiding permutations.


\bigskip
\section{Patterns of maximally-clustered elements}\label{s:patterns}

In this section, we prove that the property of heap-avoiding the hexagon in the
maximally-clustered and freely-braided cases can be characterized by avoiding
the four 1-line patterns 
\[ \{ [46718235], [46781235], [56718234], [56781234] \}. \]
This is implicit in \cite{b-w} for the fully-commutative elements.  More
generally, we describe when it is possible to translate between heap-avoidance
and classical permutation pattern avoidance.  This also provides a methodology
for using heaps to study classical permutation pattern classes.

The definitions and results in this section generalize those in \cite[Section
11]{b-j1} to arbitrary subsets of permutations that are characterized by
classical pattern avoidance.  See \cite[Section 11]{b-j1} for results that hold
in type $D$ comparing embedded factor avoidance and classical 1-line pattern
avoidance.  The proofs from that work are very similar to those given here but
need to verified in the more general setting, so we reproduce them for
convenience.

Let $S^{P} = \bigcup_{n \geq 1} S^{P}_{n}$ denote the permutations
characterized by avoiding a set of 1-line patterns $P$.  The most important
pattern classes for this work are the maximally-clustered permutations and the
freely-braided permutations, characterized by avoiding the patterns from
\eqref{e:mc.patterns} and \eqref{e:fb.patterns}, respectively.  Given a
permutation $h$, let $S^{P}(h)$ be the subset of $S^{P}$ consisting of those
permutations that heap-avoid the single pattern $h$.  Our goal is to find a set
of 1-line patterns $Q$ such that $S^{P}(h) = S^{Q}$.  Note that this is not
always possible, as demonstrated in Example 11.1 of \cite{b-j1}.

If $r(h)$ is the rank of the symmetric group containing $h$, then we let
$U^{P}(h)$ denote the set of all elements in $S^{P}_{r(h)}$ that heap-contain
$h$.  We will show that when the patterns in $U^{P}(h)$ satisfy an additional
hypothesis called the \em ideal pattern \em condition, heap-avoiding $h$ is
equivalent to avoiding the permutations of $U^{P}(h)$ as 1-line patterns.  This
set is finite since it includes only permutations from a fixed rank.  If the
support of $h$ is connected in the Coxeter graph, then the only orientation
preserving Coxeter embedding $f: S_{r(h)} \rightarrow S_{r(h)}$ is the
identity, so the elements of $U^{P}(h)$ are precisely the elements of
$S^{P}_{r(h)}$ that contain $h$ as a factor.  In this case, $U^{P}(h)$ is the
upper order ideal generated by $h$ in the two-sided weak Bruhat order on
$S_{r(h)}$.

\begin{proposition}\label{p:mc.embedded.to.one}
Let $w \in S^{P}$ and $h$ be a permutation.  If $w$ heap-contains $h$, then $w$
contains an element of $U^{P}(h)$ as a 1-line pattern.
\end{proposition}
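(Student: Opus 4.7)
The plan is to use the string diagram interpretation of heaps to extract a 1-line pattern of $w$ of rank $r(h)$ that still heap-contains $h$; heredity of classical pattern avoidance will then promote this pattern to an element of $U^{P}(h)$.

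First, apply the heap-containment hypothesis: choose commutativity classes with reduced expressions $\h$ for $h$ and $\w$ for $w$, together with an orientation-preserving Coxeter embedding $f$, such that the heap of $f(\h)$ is realized as a convex labeled subposet of the heap of $\w$.  Because any convex subposet of a finite poset admits a linear extension in which it appears as a consecutive segment (convexity prevents any outside element from being sandwiched between two elements of the subposet), we may refine the choice within the commutativity class of $\w$ so that $\w = \w' \cdot f(\h) \cdot \w''$, with $f(\h)$ appearing as a consecutive factor of this reduced expression.

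Next, overlay strings on the string diagram for $\w$.  The generators of $f(\h)$ act only among a specific set of string positions determined by the support of $f(\h)$, together with bystander positions corresponding to bystander strings of $h$.  Select the $k = r(h)$ strings of $\w$ that play the roles of the $k$ strings of $\h$ at the bottom of the $f(\h)$ block, and read off their values in the 1-line notation of $w$ at the positions they occupy at the top; this yields a 1-line pattern instance of $w$ determining a permutation $u \in S_{r(h)}$.  Restricting the full string diagram for $\w$ to just these $k$ strings and relabeling the initial positions $v_1 < \cdots < v_k$ as $1 < \cdots < k$ inverts $f$ column-by-column: each crossing in the $f(\h)$ block at column $f(i)$ becomes a crossing at column $i$, so the $f(\h)$ factor transforms into a consecutive factor equal to $\h$ in the resulting reduced expression for $u$.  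Since consecutive factors correspond to convex labeled subheaps, $u$ heap-contains $h$.

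Finally, $w \in S^{P}$ and $u$ is a 1-line pattern of $w$, so $u \in S^{P}$ by heredity of classical pattern avoidance under taking patterns.  Combined with $u \in S_{r(h)}$ and $u$ heap-containing $h$, this gives $u \in U^{P}(h)$, so $u$ is the required 1-line pattern of $w$.  The main obstacle will be the string-extraction bookkeeping: one must select the $k$ strings carefully, especially any bystander strings corresponding to generators outside the support of $h$, so that their relative order both above and below the $f(\h)$ block matches that of the corresponding strings of $\h$, ensuring that the extracted consecutive factor is genuinely $\h$ and not merely some other reduced expression for $h$.
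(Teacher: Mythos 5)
Your proposal is correct and follows essentially the same route as the paper: both arguments trace the $r(h)$ strings passing through the convex copy of the heap of $h$ inside the heap of $w$ and show that the 1-line pattern they cut out of $w$ still heap-contains $h$, hence lies in $U^{P}(h)$. The only organizational difference is that the paper verifies this incrementally, adding the remaining lattice points one at a time as maximal or minimal elements of the intermediate heap, whereas you restrict the whole wiring diagram at once to the chosen strings; the bystander-string bookkeeping you flag at the end is glossed over to about the same extent in the paper's own proof.
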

\begin{proof}
We begin by choosing a commutativity class of $w$ whose heap contains a
collection of lattice points corresponding to the heap of $h$.  Highlight a
shifted copy of the heap of $h$ as a set of lattice points inside the heap of
$w$.  Then, we can build the heap of $w$ starting from the shifted copy of the
heap of $h$ by sequentially adding lattice points that are maximal or minimal
with respect to the intermediate heap poset. 

Since the heap of $h$ is a convex subposet of the heap of $w$, any linear
extension of the heap of $h$ can be extended to a linear extension of the heap
of $w$.  To be precise, let $p_1 < p_2 < \dots < p_k$ be a linear extension of
the heap poset of $w$ and suppose that the interval $p_i < p_{i+1} < \dots <
p_j$ of this linear extension consists exactly of the entries from the heap of
$h$.  If we add lattice points to the heap of $h$ in the order $p_{i-1},
p_{i-2}, \dots, p_1, p_{j+1}, p_{j+2}, \dots, p_k$ then at each step we
sequentially add lattice points that are maximal or minimal in the heap poset of
$w$ restricted to the lattice points that were added in previous steps.  If the
heap of $w$ contains multiple connected components then we may add an entry at
some stage to start the new component and this entry will be unrelated to the
points previously added.  Eventually, we add all of the lattice points and
obtain the heap of $w$.

Next, suppose that the shifted copy of the heap of $h$ occupies columns $s, s+1,
\dots, t$ in the heap of $w$.  Then, we begin with the set of strings $S = \{s,
s+1, \dots, t+1\}$ that appear in the shifted copy of the heap of $h$.  These
strings initially correspond to the 1-line pattern $h$, and we show by induction
that $S$ continues to encode a 1-line pattern from $U^{P}(h)$ as we add minimal
or maximal lattice points to the heap.  Consider the relative order of the
strings in $S$ when we add a maximal lattice point to the heap.  Since the point
is maximal, the strings being crossed by the point are adjacent.  Thus, if the
new point crosses a pair of strings that are both in $S$, then the new string
configuration on $S$ corresponds to an element in $U^{P}(h)$.  If the new point
crosses a pair of strings such that at most one is contained in $S$, then the
string configuration on $S$ is unchanged.  Similarly, the relative order of the
strings in $S$ corresponds to an element in $U^{P}(h)$ when we add a minimal
lattice point, since the strings being crossed at each stage are adjacent.

At the end of this inductive construction, $w$ contains the 1-line pattern
encoded by the strings in $S$, and the element corresponding to this 1-line
pattern heap-contains $h$.
\end{proof}

The converse of Proposition~\ref{p:mc.embedded.to.one} can fail in general, as
demonstrated in Example 11.1 of \cite{b-j1}.  However, on the special patterns
defined below a converse can be stated.

\begin{definition}\label{d:mc.ideal}
Let $p \in S^{P}$.  Then, we say that $p$ is an \em ideal pattern in $S^{P}$ \em
if for every $q \in S^{P}_{r(p)+1}$ containing $p$ as a 1-line pattern, we have
that $q$ heap-contains $p$.
\end{definition}

This finite test extends to permutations of all ranks according to the following
result.

\begin{proposition}\label{p:mc.one.to.embedded}
If $h \in S^{P}$ is an ideal pattern and $w \in S^{P}$ contains $h$ as a 1-line
pattern, then $w$ heap-contains $h$.
\end{proposition}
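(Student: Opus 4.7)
The plan is to induct on $r(w)$, keeping $k := r(h)$ fixed. If $r(w) = k$, then containment of $h$ as a 1-line pattern forces $w$ to equal $h$ itself, so the heap of $h$ trivially sits as a convex labeled subposet of itself under the identity Coxeter embedding. If $r(w) = k+1$, the desired conclusion is precisely the hypothesis that $h$ is an ideal pattern in $S^P$, so nothing is to prove.

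For the inductive step, suppose $n := r(w) \geq k+2$ and that the proposition has been established for all elements of $S^P$ of rank less than $n$. Fix positions $i_1 < \cdots < i_k$ witnessing a 1-line instance of $h$ in $w$, and choose any position $j \in \{1, \dots, n\} \setminus \{i_1, \dots, i_k\}$, which exists because $n > k$. I would then form $w' \in S_{n-1}$ by deleting position $j$ from the 1-line notation of $w$ and flattening the remaining values to $\{1, \dots, n-1\}$. First I would verify that $w' \in S^P$, since any 1-line pattern from $P$ appearing in $w'$ would pull back to one in $w$, contradicting $w \in S^P$. Second, I would observe that $w'$ still contains $h$ as a 1-line pattern, via the images of $i_1, \dots, i_k$ under the flattening. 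The inductive hypothesis then supplies a heap-containment of $h$ in $w'$, realized by some commutativity-class representative $\w'$, some representative $\h$ for $h$, and some Coxeter shift $s_i \mapsto s_{i+m'}$ under which $\h$ sits as a convex labeled subposet inside the heap of $\w'$.

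The key remaining step, and the main obstacle, is to transfer this heap-containment from $w'$ to $w$. Reinserting the deleted entry into the string diagram of $w'$ amounts to adding one new string which contributes additional lattice points wherever it crosses existing strings. I would construct a reduced expression for $w$ whose underlying heap is obtained from the heap of $\w'$ by adjoining these new crossings, and then embed $\h$ into it using the same shift (possibly incremented by one, according to whether the new string lies to the left or to the right of the embedded copy of $h$). The delicate point is convexity: one must show, using the freedom to reselect the commutativity class of $w$, that the newly introduced lattice points can be routed so that none of them falls strictly between two comparable entries of the embedded $h$-subheap in the poset of $\w$. If this direct lifting proves unwieldy, a fallback is to invoke the ideal pattern hypothesis a second time on the rank-$(k+1)$ sub-permutation $q$ of $w$ induced on the positions $\{i_1, \dots, i_k, j\}$; this $q$ lies in $S^P_{k+1}$ and contains $h$ as a 1-line pattern, so it heap-contains $h$ by hypothesis, and I would attempt to stitch this local heap-containment together with the inductive heap-containment in $w'$ to carve out the required convex subposet in the heap of $w$.
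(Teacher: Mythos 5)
Your induction on rank (delete a position outside the pattern instance, apply the inductive hypothesis to the resulting $w'\in S^P_{n-1}$, then reinsert the deleted entry as one additional string) is exactly the paper's argument, and your ``fallback'' --- invoking the ideal-pattern hypothesis a second time on the rank-$(r(h)+1)$ configuration formed by the embedded copy of the heap of $h$ together with the crossings contributed by the new string --- is precisely how the paper closes the argument, so the direct convexity-routing you propose first is not needed. The one adjustment to make is that idealness should be applied to the pattern carried by the strings supporting the highlighted heap copy $C$ in $w'$ plus the reinserted string, not to the sub-permutation on the original instance positions $\{i_1,\dots,i_k,j\}$, since the heap-containment supplied by induction need not sit on those positions.
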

\begin{proof}
Consider the case that $w \in S_{r(h)+1}$.  Then by Definition~\ref{d:mc.ideal} we
have that $w$ heap-contains $h$, so we can highlight an instance of the heap of
$h$ inside the heap of $w$.

By induction, assume the proposition holds for all elements in $\cup_{k=1}^{n}
S^{P}_k$ and let $w \in S^{P}_{n+1}$.  If $w$ contains $h$ as a 1-line pattern
then $w$ contains some $h' \in S^{P}_{n}$ as a 1-line pattern, with the property
that $h'$ contains $h$ as a 1-line pattern.  Applying induction, we have that
$h'$ heap-contains $h$, and we want to show that the heap of $w$ must also
contain a copy of the heap of $h$. 

The string diagram imposed on the heap of $w$ can be obtained from the string
diagram on the heap of $h'$ by adding one additional string. The additional
string will add extra points to the heap at each crossing.  This string may cut
through the highlighted copy $C$ of the heap of $h$, but since $h$ is ideal, the
extra points that are added together with $C$ must heap-contain $h$ by
Definition~\ref{d:mc.ideal}.  Therefore, $w$ heap-contains $h$ as a subheap.
\end{proof}

Thus combining Proposition~\ref{p:mc.embedded.to.one} and
Proposition~\ref{p:mc.one.to.embedded}, we have shown the following result.

\begin{theorem}\label{t:mc.one.iff.embedded.finite}
Suppose $S^{P}(H)$ is the subset of permutations characterized by avoiding a
finite set $P$ of 1-line patterns and heap-avoiding a finite set $H$ of
permutations.  If each of the elements in $P' = \bigcup_{h \in H} U^{P}(h)$ is
an ideal pattern, then $S^{P}(H) = S^{P \union P'}$, so is characterized by
avoiding the permutations in $P \union P'$ as 1-line patterns.
\end{theorem}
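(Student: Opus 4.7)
The plan is to establish the double inclusion $S^{P}(H) = S^{P \union P'}$ by combining the two propositions just proved, handling each direction by contradiction.

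For the inclusion $S^{P}(H) \subseteq S^{P \union P'}$, I would take $w \in S^{P}(H)$ and show it avoids every element of $P \union P'$ as a 1-line pattern. Avoidance of $P$ is immediate since $w \in S^{P}$, so suppose for contradiction that $w$ contains some $p \in P' = \bigcup_{h \in H} U^{P}(h)$ as a 1-line pattern. Choose $h \in H$ with $p \in U^{P}(h)$; by the definition of $U^{P}(h)$, the element $p$ lies in $S^{P}$ and heap-contains $h$, and by the hypothesis of the theorem $p$ is an ideal pattern in $S^{P}$. Applying Proposition~\ref{p:mc.one.to.embedded} to the ideal pattern $p$ and the element $w \in S^{P}$ yields that $w$ heap-contains $p$. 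Chaining this with $p$ heap-contains $h$ gives that $w$ heap-contains $h$, which contradicts $w \in S^{P}(H)$.

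For the reverse inclusion $S^{P \union P'} \subseteq S^{P}(H)$, suppose $w$ avoids every pattern in $P \union P'$ as a 1-line pattern; in particular $w \in S^{P}$, so it remains only to show that $w$ heap-avoids every $h \in H$. If instead $w$ heap-contains some $h \in H$, then Proposition~\ref{p:mc.embedded.to.one} (applied to $w \in S^{P}$) produces an element $p \in U^{P}(h) \subseteq P'$ that $w$ contains as a 1-line pattern, contradicting the hypothesis that $w$ avoids $P'$.

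The one point that deserves care is the transitivity of heap-containment used in the first direction: from a Coxeter embedding $f_{1}$ placing a convex copy of the heap of $p$ inside the heap of (some commutativity class of) $w$, and a Coxeter embedding $f_{2}$ placing a convex copy of the heap of $h$ inside the heap of $p$, one must assemble $f_{1} \circ f_{2}$ into a single witness that $w$ heap-contains $h$. This requires choosing compatible commutativity class representatives so that the nested convex subposets line up, and uses the facts that the composition of orientation preserving Coxeter embeddings is again orientation preserving and that a convex subposet of a convex subposet is convex. Once this is in hand, the rest of the argument is a direct bookkeeping of the two propositions.
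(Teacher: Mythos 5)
Your proposal is correct and matches the paper's argument, which consists of exactly this combination of Propositions~\ref{p:mc.embedded.to.one} and \ref{p:mc.one.to.embedded} via the two inclusions (the paper states the deduction in a single sentence and leaves the details, including the transitivity of heap-containment that you rightly flag, implicit). Your explicit treatment of that transitivity step --- composing the Coxeter embeddings and nesting the convex subposets, which is unproblematic in the intended applications where the patterns involved are fully commutative and hence have a unique heap --- is more careful than the paper itself.
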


\begin{corollary}\label{c:hexagon.1-line}
Let $w$ be any permutation.  Then, $w$ is freely-braided and hexagon-avoiding if
and only if $w$ avoids 
\[ \{ [3421], [4231], [4312], [4321], [46718235], [46781235], [56718234], [56781234] \} \] 
as 1-line patterns.  Also, $w$ is
maximally-clustered and hexagon-avoiding if and only if $w$ avoids
\[ \{ [3421], [4312], [4321], [46718235], [46781235], [56718234], [56781234] \} \] 
as 1-line patterns.
\end{corollary}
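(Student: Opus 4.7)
The plan is to invoke Theorem~\ref{t:mc.one.iff.embedded.finite} twice: once with $P$ equal to the maximally-clustered patterns $\{[3421], [4312], [4321]\}$ and $H = \{[46718235]\}$, and once with $P$ equal to the freely-braided patterns $\{[4231], [3421], [4312], [4321]\}$ and the same $H$. In each case the statement will drop out of the theorem provided we (i) compute $U^{P}(h)$ for $h = [46718235]$ and (ii) verify that every element of $U^{P}(h)$ is an ideal pattern in $S^{P}$ in the sense of Definition~\ref{d:mc.ideal}.

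For (i), I would first observe that the hexagon has connected support on $s_1, \ldots, s_7$, so it lies in $S_8$ and the only orientation-preserving Coxeter embedding $S_8 \to S_8$ is the identity. By the remark just before Proposition~\ref{p:mc.embedded.to.one}, $U^{P}(h)$ is then the upper order ideal generated by $h$ under the two-sided weak Bruhat order on $S^{P}_8$. This is a finite set, computable by a direct enumeration (or taken from the fully-commutative computation in \cite{b-w}, since any permutation in $S_8$ heap-containing the hexagon must itself be fully commutative). Both for the maximally-clustered patterns and for the freely-braided patterns the answer is exactly $\{[46718235], [46781235], [56718234], [56781234]\}$; for the freely-braided case one verifies additionally that each of these four permutations avoids $[4231]$.

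For (ii), I would fix one of the four hexagon-type permutations $h' \in S_8$ and let $q \in S^{P}_9$ contain $h'$ as a 1-line pattern. Such a $q$ is obtained from $h'$ by inserting one new string into the string diagram. The task is to show that $q$ still heap-contains $h'$, or equivalently, that the copy of the heap of $h'$ sitting on the nine sub-strings indexed by the pattern instance appears as a convex subheap of the heap of $q$. The only way this can fail is if the inserted string produces an extra heap entry that lies strictly between two entries of the highlighted copy of the heap of $h'$, thereby destroying convexity. I would enumerate the positions (and values) at which the extra string can be placed, for each of the four hexagon-type patterns in turn, and show that in each offending case the resulting $q$ must contain $[3421]$, $[4312]$, or $[4321]$ as a 1-line pattern (and in the freely-braided argument, possibly $[4231]$), contradicting $q \in S^{P}_9$.

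The main obstacle is this last finite case analysis in (ii): there are four patterns and, for each, roughly nine choices of insertion position together with several choices of relative value, so some systematic bookkeeping is needed. The analysis is made tractable by the observation that the four hexagon-type patterns differ only in a small block of entries, so the inserted strings that could possibly violate convexity all pass through a narrow region of the heap, and in every such region one of the maximally-clustered forbidden patterns is forced. Once (i) and (ii) are in place, Theorem~\ref{t:mc.one.iff.embedded.finite} immediately yields both characterizations claimed in the corollary.
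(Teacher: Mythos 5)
Your proposal follows the paper's proof exactly: the paper also computes $U^{P}([46718235])$ to be the four hexagon-type permutations for both choices of $P$, notes that each is an ideal pattern, and applies Theorem~\ref{t:mc.one.iff.embedded.finite}; the paper simply labels these verifications ``straightforward'' where you sketch the finite string-insertion case analysis. Your outline is a correct and somewhat more explicit account of the same argument.
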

\begin{proof}
It is straightforward to verify that
\[ U^{P}([46718235]) = \{ [46718235], [46781235], [56718234], [56781234] \} \]
for $P \in \{ \text{maximally-clustered}, \text{freely-braided} \}$
and each of these patterns are ideal.  The corollary then follows from
Theorem~\ref{t:mc.one.iff.embedded.finite}.
\end{proof}

\begin{remark}
Theorem~\ref{t:mc.one.iff.embedded.finite} can also be applied to study
classical permutation pattern classes using heap-avoidance.  If $P$ is an upper
order ideal in 2-sided weak Bruhat order consisting of connected ideal patterns,
and $P$ has finitely many minimal elements $H$, then $S^{P} = S(H)$.
\end{remark}

%


\bigskip
\section*{Acknowledgments}

Hugh Denoncourt and Jozsef Losonczy provided valuable comments on an earlier
draft of this work.  In addition, we thank Sara Billey and Richard Green for
many useful conversations and suggestions, Greg Warrington for fast computer
code to calculate the Kazhdan--Lusztig polynomials, and Julian West for
introducing us to Olivier Guibert's code \cite{guibert} for permutation pattern
enumeration.

\appendix
\bigskip
\section{}\label{s:appendix}

The idea of the following algorithm is that each $[(m+2) 2 3 \dots (m+1)
1]$-instance in the 1-line notation for $w$ corresponds to a length $2 m + 1$
braid cluster in the reduced expression $\w$ that we build.  Since each of the
$[(m+2) 2 3 \dots (m+1) 1]$-instances contributes exactly $m$ to the number of
$[321]$-instances $N(w)$, we have that $\w$ is contracted by
Definition~\ref{d:mc}.


\begin{algorithm}{\bf (Produce a contracted expression for a maximally
	clustered permutation.)}
\begin{enumerate}
\item[(1)]  Given a maximally clustered permutation $w$ in 1-line notation,
	choose the unique $[321]$-instance in positions $(i, j, k)$ such that
	$j$ is leftmost.  This $[321]$-instance may be part of a larger $[(m+2)
	2 3 \dots (m+1) 1]$-instance corresponding to a braid cluster.
\item[(2)]  Then $i$ can always be brought to the right by a reduced sequence
	of length-decreasing moves so that $i$ is made adjacent to $j$.
\item[(3)]  At this point, we can mark positions 
\[ (\dots, i, j, \dots, h, \dots, k, \dots) \]
in the 1-line notation for $w$ where all entries weakly right of $h$ and
strictly left of $k$ have value greater than $i$.  Those entries lying strictly
left of $h$ and right of $j$ have values between those of $i$ and $j$, and
include no descents.
\item[(4)]  If there exists some other $[(m'+2) 2 3 \dots (m'+1) 1]$-instance
	that uses $k$ then it occurs in the segment between $h$ and $k$.  The
	instance can be made consecutive, and we apply a braid cluster to bring
	$k$ to the left.  Otherwise, $k$ does not participate in any other
	$[(m'+2) 2 3 \dots (m'+1) 1]$-instance and we simply move $k$ to the left
	by adjacent transpositions.
\item[(5)]  Eventually, $k$ will be moved past $h$.  We apply a braid
	cluster to undo the consecutive instance in positions $(i, j, \dots, h-1)$.
\item[(6)]  Repeat from (1), until there are no more $[(m+2) 2 3 \dots (m+1)
	1]$-instances for any $m$.
\end{enumerate}
\end{algorithm}
\begin{proof}
We will show that in a reduced fashion the algorithm makes every $[(m+2) 2 3 \dots
(m+1) 1]$ pattern instance consecutive and then applies a braid cluster to undo
the $[(m+2) 2 3 \dots (m+1) 1]$ instance.  In particular, we must never move an
entry that plays the role of $3$ past an entry that plays the role of $2$ in
some $[321]$ instance, unless we are applying a braid cluster to an instance
that has already been made consecutive.  Similarly, we cannot move an entry
that plays the role of $2$ past an entry that plays the role of $1$ in some
$[321]$ instance unless the move is part of a braid cluster.

First, observe that no entry can play the role of 2 in more than one
$[321]$-instance.  To see this, suppose that the 2 in $3 2 1$ is used in some
other instance $3' 2 1'$.  If $3'$ is not distinct from $3$, then we have a
forbidden pattern when we attempt to place $1'$ among the $321$ entries:
\[ 3 2 (1.5) 1 = [4321]; \ \  3 2 1 (1.5) = [4312]; \ \  3 2 (0.5) 1 =
[4312]; \ \ 3 2 1 (0.5) = [4321] \] 
Here we have indicated the value of $1'$ in parentheses, and the corresponding
permutation pattern where the relative values have been normalized to form a
permutation in $S_4$.  Otherwise, $3'$ is distinct from $3$ and then we have a
forbidden pattern when we attempt to place $3'$ among the $321$ entries:
\[ (2.5) 3 2 1 = [3421]; \ \  3 (2.5) 2 1 = [4321]; \ \  (3.5) 3 2 1 =
[4321]; \ \ 3 (3.5) 2 1 = [3421] \] 
Hence if an entry plays the role of 2 in any $[321]$-instance then the instance
is uniquely determined, proving (1).

A length-decreasing move in (2) is always available since otherwise we have a
forbidden pattern $3 (3.5) 2 1 = [3421]$.  By the leftmost choice in (1), we
never move $i$ past another entry that plays the role of $2$ in an instance
with $i$.  

Next, note that if any entry between $j$ and $k$ has value less than the value
at $j$, then we have a forbidden $[4312]$ or $[4321]$ instance.  Moreover, if an
entry located between $j$ and $k$ with value greater than the value at $i$
occurs to the left of an entry located between $j$ and $k$ with value less than
the value at $i$, then we have a forbidden $[3421]$-instance.  Therefore, all
entries with value greater than the value at $i$ occur to the right of all the
entries with values between those of $j$ and $i$.  Also, if there is a descent
among the entries with value less than the value at $i$ then we have a forbidden
$[4321]$-instance so these values are all increasing.  Hence, the 1-line
notation for $w$ has marked positions $(\dots, i, j, \dots, h, \dots, k, \dots)$
as described in (3).

To prove (4) note that $k$ can always be moved to the left in a length
decreasing fashion, or else we obtain a $[4312]$ pattern.  It remains to check
that we can consecutively undo any other $[(m+2) 2 3 \dots (m+1) 1]$-instance in
which $k$ participates.  Observe that $k$ cannot play the role of 3 nor 2 in
another pattern, or we obtain a forbidden $[4321]$ pattern.

Hence, we suppose that $k$ plays the role of 1 in an instance of the form
$(m+2)' 2' 3' \dots (m+1)' 1$ where $(m+2)'$ must occur in a position weakly to
the right of $h$ because there are no descents among the entries with values $<
i$.  Also, the entries $2', 3', \dots, (m+1)'$ must be consecutive, since any
entry with value $> (m+2)'$ among these yields a forbidden $[3421]$ pattern,
while any entry with value $< 1$ yields a forbidden $[4312]$ pattern.

Then the entries of our 1-line notation are of the form 
\[ 3  2  \dots  (m+2)'  \dots  2' 3' \dots (m+1)' 1 \]
and we can assume that $2' 3' \dots (m+1)' 1$ and $3 2$ have been made
consecutive by previous steps.  A length-decreasing move to make $(m+2)'$ closer
to $2'$ is always available since otherwise we have a $[3421]$ pattern.  The
only other obstruction is an entry $2''$ that participates with $(m+2)'$ in
another $[321]$-instance.  However, if $2''$ is not part of the $(m+2)' 2' 3'
\dots (m+1)' 1$ cluster, then we must have that $2''$ has value less than that
of $1$.  Hence, we obtain a $[4312]$ pattern from the $3 2 2'' 1$ entries.

It is evident that (5) and (6) can be accomplished using the previous steps of
the algorithm.

Since the permutation is assumed to be finite, and we reduce the length at each
step, the algorithm eventually terminates.  By construction, the reduced
expression we produce from the algorithm has a braid cluster for each
$[m 2 3 \dots (m-1) 1]$-instance.  Hence, it is maximally clustered.
\end{proof}



\end{document}